\theoremstyle{plain}
\newtheorem{theorem}{Theorem}
\newtheorem{lemma}{Lemma}
\theoremstyle{definition}
\newtheorem{definition}{Definition}
\newtheorem*{problem}{Problem}
\numberwithin{equation}{section}
\begin{document}
\title{Polynomials with Maximum Lead Coefficient bounded on a Finite Set}
\author{Karl Levy}
\email{Karl.Ethan.Levy@Gmail.com}
\date{\today}

\begin{abstract}
What is the maximum possible value of the lead coefficient of a degree $d$ polynomial $Q(x)$ if $|Q(1)|,|Q(2)|,\ldots,|Q(k)|$ are all less than or equal to one?  More generally we write $L_{d,[x_k]}(x)$ for what we prove to be the unique degree $d$ polynomial with maximum lead coefficient when bounded between $1$ and $-1$ for $x\in [x_k]=\{x_1,\cdots,x_k\}$. We  calculate explicitly the lead coefficient of $L_{d,[x_k]}(x)$ when $d\leq 4$ and the set $[x_k]$ is an arithmetic progression. We give an algorithm to generate $L_{d,[x_k]}(x)$ for all $d$ and $[x_k]$.
\end{abstract}

\maketitle
\tableofcontents

\begin{section}{Introduction}

The maximum density of sets that avoid arithmetic progressions is a long standing problem \cite{Roth}\cite{Szemeredi}.  In  O'Bryant's paper \cite{O'Bryant} a technique for constructing a lower bound on this density involves building sets which try to avoid having $k$-term progressions map into an interval in the image of a degree $d$ polynomial.  This has lead to the question: what is the maximum possible value of the lead coefficient of a degree $d$ polynomial $Q(x)$ when $|Q(1)|,|Q(2)|,$ $\ldots,|Q(k)|$ are all less than or equal to one?

\begin{theorem}\label{Thm:upperBounds}
If $M>0$ and $Q(x)$ is a polynomial with degree $d$ such that
\[|Q(x)|=|a_dx^d+\cdots+a_1x+a_0|\leq M\]
for all $x$ in the $k$-term arithmetic progression $x_1, x_1+\Delta,...,x_1+(k-1)\Delta$ with $k>d$, then the lead coefficient $a_d$ of $Q(x)$ satisfies for 

\begin{align*}
d=1\text{, }&a_1\leq \frac{M}{\Delta}\frac{2}{k-1}\\
d=2\text{, }&a_2\leq \begin{cases} \dfrac{M}{\Delta^2}\dfrac{8}{(k-1)^2}\scriptstyle\text{ for }k\equiv 1\bmod 2\\ \dfrac{M}{\Delta^2}\dfrac{8}{k(k-2)}\scriptstyle \text{ for }k\equiv 0\bmod 2 \end{cases}\\
d=3\text{, }&a_3\leq \begin{cases} \dfrac{M}{\Delta^3}\dfrac{32}{(k-1)^3}&\scriptstyle \text{ for }k\equiv 1\bmod 4\\ \dfrac{M}{\Delta^3}\dfrac{32}{k(k-1)(k-2)}&\scriptstyle \text{ for } k\equiv 2\bmod 4\\ \dfrac{M}{\Delta^3}\dfrac{32}{(k+1)(k-1)(k-3)}&\scriptstyle \text{ for } k\equiv 3\bmod 4\\ \dfrac{M}{\Delta^3}\dfrac{32}{k(k-1)(k-2)}&\scriptstyle \text{ for } k\equiv 0\bmod4\end{cases} \\
d=4\text{, }&a_4\leq\begin{cases}\dfrac{M}{\Delta^4}\cdot\min\limits_{ x\in I}\left\{\dfrac{8}{x^2(k-1)^2-4x^4}\right\} &\scriptstyle 
\text{ for } k\equiv 1\bmod 2 \text{ and } I=\{\lceil {\frac{k-1}{2\sqrt{2}}}\rceil,\lfloor {\frac{k-1}{2\sqrt{2}}}\rfloor\} \\ \dfrac{M}{\Delta^4}\cdot\min\limits_{ x\in H}\left\{\dfrac{-32}{16x^4-4((k-1)^2+1)x^2+(k-1)^2}\right\} &\scriptstyle \text{ for } k\equiv 0\bmod 2 \text{ and } H=\{\frac{1}{2}\lceil{\frac{k-1}{\sqrt{2}}}\rceil,\frac{1}{2}\lfloor{\frac{k-1}{\sqrt{2}}}\rfloor\}\end{cases} \\
\end{align*}
and for $d\geq5$ we can use the algorithm described in section \ref{algorithmSection} to find the upper bound for $a_d$.
\end{theorem}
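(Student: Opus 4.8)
The plan is to extract the leading coefficient as a divided difference over a carefully chosen set of $d+1$ nodes of the progression and then bound it by the triangle inequality. Fix $d+1$ of the points and write them as $y_i=x_1+t_i\Delta$ with $t_0<t_1<\dots<t_d$ in $\{0,1,\dots,k-1\}$. Since $\deg Q\le d$, the Lagrange interpolant of $Q$ through $y_0,\dots,y_d$ is $Q$ itself, so reading off the coefficient of $x^d$ and using $y_i-y_j=(t_i-t_j)\Delta$ gives
\[
a_d=\sum_{i=0}^{d}\frac{Q(y_i)}{\prod_{j\ne i}(y_i-y_j)}=\frac{1}{\Delta^{d}}\sum_{i=0}^{d}\frac{Q(y_i)}{\prod_{j\ne i}(t_i-t_j)} .
\]
With $|Q(y_i)|\le M$ this yields $a_d\le \frac{M}{\Delta^{d}}\,S(t_0,\dots,t_d)$ where $S(t_0,\dots,t_d)=\sum_{i}\big(\prod_{j\ne i}|t_i-t_j|\big)^{-1}$, and since this holds for every admissible node set, it suffices to exhibit for each $d$ one node set whose $S$ equals the claimed right-hand side. (No optimality of the chosen set is needed for the upper bound; sharpness is the separate matter handled by the construction of $L_{d,[x_k]}$.)

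The second step is the node choice and the ensuing algebra for $d\le 4$. Writing a node set by its consecutive gaps $g_1,\dots,g_d$ with $g_1+\dots+g_d\le k-1$, the sum $S$ collapses after partial fractions to $S=2/g_1$ for $d=1$ and $S=2/(g_1g_2)$ for $d=2$; for $d=3$ a set symmetric about the midpoint of the window gives $S=1/\big(\alpha\beta(\beta-\alpha)\big)$, and for $d=4$ such a set (the symmetric pairs $c\pm\alpha,\ c\pm\beta$ together with the midpoint $c$) gives $S=2/\big(\alpha^{2}(\beta^{2}-\alpha^{2})\big)$, with $\alpha<\beta$ the inner and outer half-widths. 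In each case one takes $\beta$ as large as the window $\{0,\dots,k-1\}$ allows and then maximizes the remaining product over the admissible lattice of inner widths — which is $\mathbb Z$ or $\tfrac12+\mathbb Z$ depending on whether the midpoint is an integer, and this is what forces the case split on the residue of $k$ (mod $2$ when $d=2,4$, mod $4$ when $d=3$). For $d=4$ the continuous optimum $\alpha=\tfrac{k-1}{2\sqrt 2}$ is irrational, so one must compare the two nearest admissible half-widths, which is precisely the two-element set $I$ (resp. $H$) in the statement; substituting back gives the displayed formulas.

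For $d\ge 5$ the candidate configurations no longer form a one- or two-parameter family and I would invoke Section~\ref{algorithmSection}, which searches for the optimal node set (equivalently, builds $L_{d,[x_k]}$) and returns the matching value of $\frac{M}{\Delta^{d}}S$. It is worth noting that, once $\deg Q\le d$ is the only constraint, the sharpest linear functional extracting $a_d$ need not be supported on a mere $d+1$ points: the true optimal bound is $\frac{M}{\Delta^{d}}\min\{\|c\|_1:\ \sum_i c_i t_i^{\,m}=\delta_{m,d}\text{ for }m=0,\dots,d\}$, which by linear-programming duality coincides with the maximum of $a_d$ and is what the algorithm computes.

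I expect the main obstacle to be the node choice itself, and most of all the $d=4$ case with $k$ even. There the midpoint of the window is a half-integer, so no perfectly symmetric $(d+1)$-point set exists; one must use a near-symmetric set — symmetric inner and outer pairs plus one extra node next to the midpoint — and then verify both that such a set is admissible for the given $k$ and that the rounded half-widths recorded in $H$ actually minimize $S$, which calls for a unimodality argument on the half-integer lattice together with separate checks for the first few values of $k$. Everything upstream — the divided-difference identity, the reduction to minimizing $S$, and the partial-fraction collapse — is routine.
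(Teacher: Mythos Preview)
Your approach is correct and genuinely different from the paper's. The paper never writes down a divided difference: instead it first develops the structural theory of the extremizer $L_{d,[x_k]}$ (real distinct roots inside $(x_1,x_k)$; $L(x_1)=(-1)^d$, $L(x_k)=1$; and $|L|=1$ at some point of $[x_k]$ between every pair of consecutive roots), proves uniqueness, and then for each $d\le 4$ shifts to the centred progression, uses parity to cut the number of unknown coefficients in half, eliminates further coefficients via the forced touching constraints, and reads off $a_d$ as the tightest of the remaining inequalities $L_{d,k'}(x)\ge -1$. Your route bypasses all of those lemmas: the Lagrange identity plus the triangle inequality gives the upper bound outright, and the remaining work is the purely combinatorial minimisation of $S(t_0,\dots,t_d)$. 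What you gain is that the inequality side needs no existence, uniqueness, or equioscillation of an extremizer; what the paper gains is that its structural lemmas simultaneously deliver sharpness and drive the algorithm of Section~\ref{algorithmSection}, whereas in your framework sharpness is a genuinely separate step.

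On the obstacle you flag (even $k$, $d=4$): the near-symmetric five-node set $\{c-\beta,\,c-\alpha,\,c+\tfrac12,\,c+\alpha,\,c+\beta\}$ with $\beta=\tfrac{k-1}{2}$ and $\alpha\in\tfrac12+\mathbb Z$ does work. Grouping $1/P_{-\beta}+1/P_{\beta}$ and $1/P_{-\alpha}+1/P_{\alpha}$ and adding $1/P_{1/2}$ collapses, by the same partial-fraction trick you use in the odd case, to
\[
S=\frac{2}{(\alpha^{2}-\tfrac14)(\beta^{2}-\alpha^{2})},
\]
and since $16x^{4}-4((k-1)^{2}+1)x^{2}+(k-1)^{2}=(4x^{2}-1)\,(4x^{2}-(k-1)^{2})$ this is exactly the paper's even-$k$ expression with $x=\alpha$. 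So no case-by-case checking of small $k$ is needed; the computation closes cleanly. The paper arrives at the same formula from the opposite direction, by imposing $L_{4,k'}(\tfrac12)=L_{4,k'}\!\left(\tfrac{k-1}{2}\right)=1$ on an even quartic and then optimising the single surviving inequality.
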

Moreover, the inequalities in Theorem \ref{Thm:upperBounds} are sharp and there is a unique and computable polynomial for which equality is achieved.

\begin{theorem}
Given a $k$-term arithmetic progression 
\[[x_k]=\{x_1, x_1+\Delta,...,x_1+(k-1)\Delta\},\] 
for every $d<k$ there is a unique and computable polynomial 
\[|L_{d,[x_k]}(x)|=|a_dx^d+\cdots+a_1x+a_0|\]
 with maximum lead coefficient $a_d$ such that for all $x\in[x_k]$ we have that
\[|L_{d,[x_k]}(x)|\leq 1.\]  Moreover, we have for

\begin{align*}
d=1\text{, }&a_1= \frac{M}{\Delta}\frac{2}{k-1}\\
d=2\text{, }&a_2= \begin{cases} \dfrac{M}{\Delta^2}\dfrac{8}{(k-1)^2}\scriptstyle\text{ for }k\equiv 1\bmod 2\\ \dfrac{M}{\Delta^2}\dfrac{8}{k(k-2)}\scriptstyle \text{ for }k\equiv 0\bmod 2 \end{cases}\\
d=3\text{, }&a_3= \begin{cases} \dfrac{M}{\Delta^3}\dfrac{32}{(k-1)^3}&\scriptstyle \text{ for }k\equiv 1\bmod 4\\ \dfrac{M}{\Delta^3}\dfrac{32}{k(k-1)(k-2)}&\scriptstyle \text{ for } k\equiv 2\bmod 4\\ \dfrac{M}{\Delta^3}\dfrac{32}{(k+1)(k-1)(k-3)}&\scriptstyle \text{ for } k\equiv 3\bmod 4\\ \dfrac{M}{\Delta^3}\dfrac{32}{k(k-1)(k-2)}&\scriptstyle \text{ for } k\equiv 0\bmod4\end{cases} \\
d=4\text{, }&a_4= \begin{cases}\dfrac{M}{\Delta^4}\cdot\min\limits_{ x\in I}\left\{\dfrac{8}{x^2(k-1)^2-4x^4}\right\} &\scriptstyle 
\text{ for } k\equiv 1\bmod 2 \text{ and } I=\{\lceil {\frac{k-1}{2\sqrt{2}}}\rceil,\lfloor {\frac{k-1}{2\sqrt{2}}}\rfloor\} \\ \dfrac{M}{\Delta^4}\cdot\min\limits_{ x\in H}\left\{\dfrac{-32}{16x^4-4((k-1)^2+1)x^2+(k-1)^2}\right\} &\scriptstyle \text{ for } k\equiv 0\bmod 2 \text{ and } H=\{\frac{1}{2}\lceil{\frac{k-1}{\sqrt{2}}}\rceil,\frac{1}{2}\lfloor{\frac{k-1}{\sqrt{2}}}\rfloor\}\end{cases} \\
\end{align*}
and for $d\geq5$ we can use the algorithm described in section \ref{algorithmSection} to find $a_d$.
\end{theorem}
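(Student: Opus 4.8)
The plan is to recognize this as a finite (discrete) Chebyshev extremal problem, solve the optimization by linear-programming duality, and then extract the closed-form leading coefficients for $d\le4$ by an elementary but fussy integer optimization. First I would normalize: the affine substitution $x\mapsto (x-x_1)/\Delta$ multiplies the leading coefficient by $\Delta^{-d}$ and carries $[x_k]$ to $\{1,2,\dots,k\}$, and dividing $Q$ by $M$ reduces the bound to $1$, so the factors $M/\Delta^{d}$ in the stated formulas are reinstated at the end. With $x_i=i$, the feasible set $\mathcal F$ of polynomials $Q$ of degree at most $d$ with $|Q(i)|\le 1$ is, under the evaluation map $Q\mapsto(Q(1),\dots,Q(k))$ — injective since $k>d$, by Vandermonde — a compact convex polytope in the $(d+1)$-dimensional coefficient space, so the linear functional $Q\mapsto a_d$ attains a maximum. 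The core observation is the dual bound: for any $c=(c_1,\dots,c_k)$ with $\sum_i c_i\,i^{m}=\delta_{m,d}$ for $0\le m\le d$ one has $a_d=\sum_i c_iQ(i)$ identically on degree-$\le d$ polynomials, hence $a_d\le\sum_i|c_i|$ for every $Q\in\mathcal F$; minimizing $\|c\|_1$ over this affine constraint is an $\ell^{1}$ linear program whose value equals $\max_{\mathcal F}a_d$ by strong duality.

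Writing the $\ell^{1}$-minimization in standard form, its optimum is attained at a $c$ supported on exactly $d+1$ indices: the moment matrix has full rank $d+1$, and no collection of $\le d$ evaluation functionals can represent the leading-coefficient functional (a degree-$\le d$ polynomial vanishing at $\le d$ prescribed points may still have nonzero leading coefficient), while a basic solution has at most $d+1$ nonzeros. On its support $S$, a $(d+1)$-subset, the minimizer is forced to be $c_i=1/\prod_{j\in S\setminus\{i\}}(i-j)$, the leading coefficient of the $i$-th Lagrange basis polynomial for $S$, and this vector is dual-feasible for \emph{every} $(d+1)$-subset; hence
\[
\max_{\mathcal F}a_d \;=\; \min_{S:\ |S|=d+1}\ a_d(S),\qquad
a_d(S):=\sum_{i\in S}\frac{1}{\prod_{j\in S\setminus\{i\}}|i-j|},
\]
and enumerating the $\binom{k}{d+1}$ subsets and computing each $a_d(S)$ is exactly the algorithm of Section~\ref{algorithmSection}. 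Fixing a minimizer $S^{\ast}$ with Lagrange-coefficient vector $c^{\ast}$, complementary slackness forces every optimal $Q^{\ast}$ to satisfy $Q^{\ast}(i)=\operatorname{sign}(c^{\ast}_i)$ for all $i\in S^{\ast}$; since exactly $d-\ell$ points of $S^{\ast}$ lie above its $\ell$-th point, $\operatorname{sign}(c^{\ast}_i)=(-1)^{d-\ell}$ alternates along $S^{\ast}$, so $Q^{\ast}$ is the unique degree-$\le d$ interpolant of these $\pm1$ values — this is $L_{d,[x_k]}$. Uniqueness then follows at once: if $Q_1,Q_2$ were both optimal, $\tfrac12(Q_1+Q_2)$ would be optimal and equal $\pm1$ at each of the $d+1$ points of $S^{\ast}$, forcing $Q_1=Q_2$ there and hence everywhere.

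For the closed forms with $d\le4$, I would observe that $a_d(S)$ is the $d$-th divided difference of the data assigning $(-1)^{d-\ell}$ to the $\ell$-th point of $S$; expanding recursively, the strict sign alternation makes everything telescope, yielding $a_1(S)=2/g_1$, $a_2(S)=2/(g_1g_2)$, $a_3(S)=2(g_1+g_3)/\bigl(g_1g_2g_3(g_1+g_2+g_3)\bigr)$, and a comparable symmetric rational expression when $d=4$, where $g_1,\dots,g_d$ are the consecutive gaps of $S$. Since $a_d(S)$ is non-increasing when any single gap is enlarged, a minimizer may be taken with $\sum_i g_i=k-1$; and since reversing the point order leaves the $\pm1$ data invariant, $a_d(S)$ is symmetric under $(g_1,\dots,g_d)\mapsto(g_d,\dots,g_1)$, which pins the candidate minimizer to a symmetric gap vector. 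For $d\le3$ this is an AM--GM optimization whose integer rounding splits into the stated residue classes mod $2$ and mod $4$. For $d=4$ the symmetric reduction collapses $a_4(S)$ to $8/\bigl(x^{2}(k-1)^{2}-4x^{4}\bigr)$ in the odd case and to $-32/\bigl(16x^{4}-4((k-1)^{2}+1)x^{2}+(k-1)^{2}\bigr)$ in the even case, with $x$ a single gap parameter, whose continuous minimizer sits at $x=(k-1)/(2\sqrt 2)$ (respectively near $\tfrac12\cdot\tfrac{k-1}{\sqrt 2}$); comparing the two nearest admissible (integer, respectively half-integer) values of $x$ produces precisely the minimum over the two-element sets $I$ and $H$, the appearance of a genuine two-element minimum rather than a single formula reflecting that $x^{2}(k-1)^{2}-4x^{4}$, seen through $x\mapsto x^{2}$, is not symmetric about its maximizer.

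The conceptual half — the duality argument, the reduction to $\min_S a_d(S)$, and uniqueness — is standard Chebyshev/LP material and should go through cleanly. The real work, and the most error-prone step, is the second half: computing the $d=4$ divided-difference closed form correctly, and then carrying out the constrained \emph{integer} optimization — in particular rigorously excluding asymmetric gap vectors as competitors, and handling the rounding (including the parity constraints that force the chosen gaps to be integers or half-integers) so that the answer comes out to be exactly the minimum over $I$ or $H$ with the correct split into residue classes modulo $2$, and modulo $4$ when $d=3$.
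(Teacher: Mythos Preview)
Your approach is genuinely different from the paper's and, at the conceptual level, sound. The paper never invokes linear programming or duality: it builds the structure of $L_{d,[x_k]}$ by a sequence of direct perturbation arguments (roots are real, distinct, in $(x_1,x_k)$; the extremal polynomial hits $\pm1$ at $x_1,x_k$ and between consecutive roots), proves uniqueness by averaging two putative optima and observing the average must still equioscillate at $d+1$ points, and then computes the $d\le4$ leading coefficients by centering the progression at $0$, using the uniqueness theorem to force even/odd symmetry, eliminating all but one free coefficient via the known boundary equalities, and reading off the tightest of the remaining inequalities. Your route---LP duality giving $\max a_d=\min_{|S|=d+1}a_d(S)$, complementary slackness forcing the alternating $\pm1$ interpolant, uniqueness from agreement at $d+1$ points---is cleaner and more systematic for the existence/uniqueness half, and it yields a feasibility-free algorithm as a byproduct. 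The paper's perturbation lemmas are more elementary (no appeal to strong duality) and make the geometric picture explicit, which is what drives its closed-form computations.

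Two points to flag. First, your description ``enumerating the $\binom{k}{d+1}$ subsets and computing each $a_d(S)$ is exactly the algorithm of Section~\ref{algorithmSection}'' is not accurate: the paper's algorithm enumerates only the $\binom{k-2}{d-1}$ subsets containing both endpoints (justified by its Lemma on terminal points), builds the interpolant, \emph{filters for feasibility on all of $[x_k]$}, and then maximizes the leading coefficient over the survivors. Your min-over-subsets formula is a different (and in some ways nicer) algorithm; equating them requires your monotonicity claim to force the endpoints into $S^\ast$, plus an argument that the minimizing $S^\ast$ always produces a feasible interpolant---both true, but not what the paper does. Second, the closed-form half of your proposal is a plan rather than a proof: the assertion that $a_d(S)$ is non-increasing in each gap is stated without justification (it is checkable by hand for $d\le3$ from your divided-difference formulas, but for $d=4$ you would need to write out the symmetric rational function and verify it), and ``rigorously excluding asymmetric gap vectors as competitors'' for $d=4$ is named as a task but not carried out. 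The paper sidesteps this entirely by using the even/odd symmetry of $L_{4,k'}$ (forced by uniqueness) to reduce to a one-parameter inequality from the start, which is why its $d=4$ computation is short; your reduction to a single gap parameter $x$ would need a comparable symmetry argument on $a_4(S)$ to be complete.
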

  
In general we will call polynomials $L$-polynomials if they have maximum possible lead coefficient while their absolute value is bounded on some finite set. More specifically
\begin{definition}\label{Def:LPolynomials}
Given $k>d\geq 1$ and $[x_k]=\{x_1,x_2,\cdots,x_k\}\subset\mathbb{R}$ then
\[ L_{d,[x_k]}(x) \]
denotes the unique degree $d$ polynomial with maximum lead coefficient such that when $x\in[x_k]$ 
\[|L_{d,[x_k]}(x)|\leq 1.\]
\end{definition}
The uniqueness of this polynomial is proved by Theorem \ref{Thm:Unique} in Section \ref{Sec:MoreTheorems}.  In Sections \ref{Sec:LeadCoefficients} and \ref{Sec:TermsCheb} we will be concerned with $L_{d,[k]}$, here $[k]=\{1,2,\cdots,k\}$.
 
\subsection{A brief summary}
In Section \ref{Sec:ChebPolynomials} we discuss the connection between $L$-polynomials and Chebyshev $T$-polynomials ($L$-polynomials being a discrete analog of Chebyshev $T$-polynomials).  In Section \ref{Sec:CombinatorialPerspective} the problem is recast from the perspective of combinatorial geometry.  From this perspective we rule out the existence of $L$-polynomials when $k\leq d$ and prove theorems that will then be used: to prove the uniqueness of all $L_{d,[x_k]}$, to compute the lead coefficients of some $L_{d,[k]}$, and to describe an algorithm that generates all $L_{d,[x_k]}$.  In Section \ref{Sec:MoreTheorems} more such theorems are proved (but without the problem being recast from the perspective of combinatorial geometry) and the uniqueness of $L_{d,[x_k]}$ is then proved.  In Section \ref{Sec:LeadCoefficients} the lead coefficients of $L_{d,[k]}$ (when $d\leq 4$ and all $k>d$) are calculated and an algorithm that generates all $L_{d,[x_k]}$ (when $k>d\geq1$) is  described.  In Section \ref{Sec:TermsCheb} we write some $L_{d,[k]}$ in terms of corresponding degree $d$ Chebyshev $T$-polynomials.  

\end{section}
\pagebreak

\begin{figure}[h]
\begin{center}
\begin{picture}(250,175)
\includegraphics{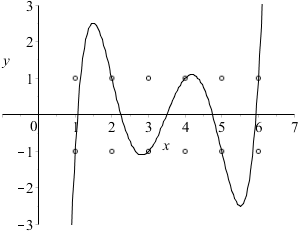}
\end{picture}
\end{center}
\caption{$L_{5,[6]}$ \label{fig:L picture d5}}
\end{figure}

\begin{figure}[h]
\begin{center}
\begin{picture}(250,175)
\includegraphics{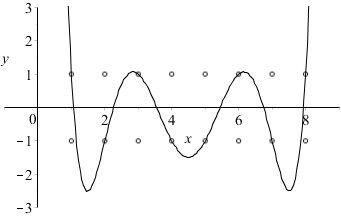}
\end{picture}
\end{center}
\caption{$L_{6,[7]}$ \label{fig:L picture d6}}
\end{figure}

\begin{section}{Chebyshev Polynomials}\label{Sec:ChebPolynomials}
Chebyshev Polynomials $T_d(x)$ are defined by
\[T_0(x)=1,\]
\[T_1(x)=x,\]
\[T_{d+1}(x)=2xT_{d}(x)-T_{d-1}(x).\]
Alternatively, each $T_d(x)$ is the unique polynomial of degree $d$ satisfying the relationship
\[T_d(x)=\cos(d\arccos(x)).\]
Substituting $\cos(x)$ for $x$ this becomes
\[T_d(\cos(x))=\cos(dx).\]
The first few $T_d(x)$ polynomials are
\begin{align*}
T_0(x)=&1\\
T_1(x)=&x\\
T_2(x)=&2x^2-1\\
T_3(x)=&4x^3-3x\\
T_4(x)=&8x^4-8x\\
T_5(x)=&16x^5-20x^3+5x.
\end{align*}

An elementary proof \cite{Rivlin} shows
\[f(x)=\frac{1}{2^{d-1}}T_d(x)\]
to be the unique monic polynomial of degree $d$ with minimum infinity norm on the interval $[-1,1]$.  To wit
$\|f(x)\|_\infty = \frac{1}{2^{d-1}}$.  And thus $T_d$ is the polynomial of degree $d$ that, while bounded between $-1$ and $1$ on the interval $[-1,1]$, has the maximum possible lead coefficient (equal to  $2^{d-1}$, as indicated above by the previous two equations).  And thus our $L$-polynomials, which are bounded between $-1$ and $1$ on a set of $k$ values, can be viewed as a discrete analog of the continuously bounded $T$-polynomials.

We can stretch the Chebyshev polynomial $T_d$ from the interval $[-1,1]$ to the interval $[x_1,x_k]$ by composing it with the following bijection from $[x_1,x_k]$ to $[-1,1]$
\[s(x)=\frac{2x-(x_k+x_1)}{x_k-x_1},\]
and thus from the equations above we get the lead coefficient of $T_d(s(x))$ is 
\[a_d=2^{d-1}\left(\frac{2}{x_k-x_1}\right)^d=\frac{2^{2d-1}}{(x_k-x_1)^d}.\]
This is a lower bound for the maximum lead-coefficient we are looking for since $|T(s(x))|\leq 1$ for $x\in[x_1,x_k]$ are stronger constraints than $|L_{d,[k]}(x)|\leq 1$ for $x\in\{x_1,x_2,\cdots,x_k\}=[x_k]$.

\end{section}

\begin{section}{A Combinatorial Geometry Perspective on $L$-Polynomials}\label{Sec:CombinatorialPerspective}
The problem starts looking purely combinatorial when we remember that polynomials can be factored.  

Consider a polynomial written in terms of its factors
\[Q(x)=a_d\prod_{i\in [d]}(x-r_i)\]
where $d=\deg{(Q(x))}$ and $r_i\in\mathbb{C}$.

Now the question is:  how big can $a_d$ be while, for all $x\in[x_k]$, the following inequality holds
\[|Q(x)|=\left|a_d\prod_{i\in [d]}(x-r_i)\right|=a_d\prod_{i\in [d]} |(x-r_i)|\leq 1?\]
Thus, finding the maximum value of $a_d$ is equivalent to finding the minimum value of the maximum product of distances
\[\max_{x\in [x_k]}\left\{\prod_{i\in [d]} |(x-r_i)|\right\}\]
for all \textit{multisets} $\{r_1,\cdots,r_d\}\subset\mathbb{C}$.  Put concisely:
\begin{problem}
For a given $[x_k]=\{x_1,x_2,\cdots,x_k\}\subset \mathbb{R}$ find the minimum
\[\min_{|R|=d}\left\{\max_{x\in [x_k]}\left\{\prod_{r_i\in R} |x-r_i|\right\}\right\}\]
taken over all \em{multisets} $R=\{r_1,\cdots,r_d\}\subset \mathbb{C}$.
\end{problem}
This minimum is equal to $\frac{1}{a_d}$ where $a_d$ is the lead coefficient of our $L_{d,[x_k]}$.

We do not necessarily have to find the minimizing set $R$ of roots  in order to find this minimum value (i.e. the reciprocal of our maximum lead coefficient).  Though, perhaps it will be useful to consider how a minimizing multiset of roots $R$ must look. We do so and continue by ruling out the existence of a maximum lead coefficient for certain cases of our problem.

\subsection{When $k\leq d$}
Construct a multiset of ``roots'' $R$ such that $[x_k]\subset R$ and thus all products of distances for $x \in [x_k]$ will be $0$ . That is, for example, define
\[\frac{|Q(x)|}{a_d}=|x-x_1|^{1+(d-k)}|x-x_2||x-x_3|\cdots|x-x_k|,\]
then for all $x\in [x_k]$
\[|Q(x)|=0\leq 1.\]
Since $|Q(x)|$ is equal to zero no matter what value is picked for $a_d$, there is no maximum $a_d$.  From here on we are only concerned with the cases where $k>d$.

\subsection{Some useful theorems}
Now we prove a few lemmas using this combinatorial geometry view.

\begin{lemma}\label{Thm:RealRoots}
If for $k>d\geq2$ 
\[Q(x)=a_d\prod_{i\in [d]}(x-r_i)\]
is a degree $d$ polynomial such that for a given
\[[x_k]=\{x_1< x_2<\cdots< x_k\}\]
we have that 
\begin{equation}\label{Exp:MinMax}
\max_{x\in[x_k]}\left\{\prod_{i\in[d]} |x-r_i|\right\}
\end{equation}
is minimal then the multiset $\{r_1,\cdots,r_d\}$ of all $d$ roots of $Q(x)$ is contained in $\mathbb{R}$.\end{lemma}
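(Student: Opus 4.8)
The plan is to argue by contradiction: suppose a minimizing polynomial $Q(x)=a_d\prod_{i\in[d]}(x-r_i)$ has a non-real root. Since $Q$ need not have real coefficients, non-real roots need not come in conjugate pairs, so I would handle a single complex root $r=\alpha+i\beta$ with $\beta\neq 0$ and show that replacing it by the real number $\alpha$ (its real part) strictly decreases every factor $|x-r|$ for $x\in[x_k]$, hence strictly decreases the product of distances at every point of $[x_k]$, hence strictly decreases the maximum in \eqref{Exp:MinMax}. This contradicts minimality. The key inequality is the elementary fact that for real $x$,
\[
|x-(\alpha+i\beta)|=\sqrt{(x-\alpha)^2+\beta^2}>\sqrt{(x-\alpha)^2}=|x-\alpha|
\]
whenever $\beta\neq 0$, with the inequality strict and uniform in the sense that the gap does not vanish.

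The main subtlety — and the step I expect to be the real obstacle — is bookkeeping: a single replacement $r\mapsto \operatorname{Re}(r)$ changes one factor at a time, and I must make sure the resulting multiset still has exactly $d$ elements (it does: we swap one element for another) and that the strict decrease at each $x\in[x_k]$ genuinely forces a strict decrease of the maximum over the finite set $[x_k]$. Because $[x_k]$ is finite and each term strictly drops, the maximum strictly drops as well; there is no issue of a supremum not being attained. One should also note the hypothesis $k>d$ is not actually needed for this lemma's argument per se, but it guarantees the minimum is finite and positive (by the discussion ruling out $k\le d$), so that "minimal" is meaningful and $Q$ is not identically zero on $[x_k]$; I would remark on this to keep the logic clean.

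To organize the write-up I would proceed in three short steps. First, I would record the pointwise inequality above and observe it is strict for every real $x$ when $\beta\neq0$. Second, assuming for contradiction that the minimizing multiset $R=\{r_1,\dots,r_d\}$ contains some $r_j=\alpha_j+i\beta_j$ with $\beta_j\neq0$, I would form $R'=(R\setminus\{r_j\})\cup\{\alpha_j\}$ and compute, for each fixed $x\in[x_k]$,
\[
\prod_{r\in R'}|x-r| \;=\; |x-\alpha_j|\prod_{i\neq j}|x-r_i| \;\le\; |x-r_j|\prod_{i\neq j}|x-r_i| \;=\; \prod_{r\in R}|x-r|,
\]
with strict inequality unless $\prod_{i\ne j}|x-r_i|=0$. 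Third, I would take the maximum over $x\in[x_k]$: since $k>d$ there are at most $d$ points of $[x_k]$ where the trailing product can vanish, so at the point (or some point) achieving the maximum the inequality is strict, giving
\[
\max_{x\in[x_k]}\prod_{r\in R'}|x-r| \;<\; \max_{x\in[x_k]}\prod_{r\in R}|x-r|,
\]
contradicting minimality of $R$. Repeating (or applying the argument to the multiset all at once, replacing every non-real root by its real part simultaneously) shows all roots may be taken real, which is the claim. The only care needed in the "all at once" version is that simultaneously shrinking several factors still yields a pointwise-$\le$ comparison with strict inequality wherever the original product is nonzero, which is immediate.
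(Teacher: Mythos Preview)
Your proposal is correct and follows essentially the same approach as the paper: assume a non-real root $r=\alpha+i\beta$, replace it by its real part $\alpha$, and use the strict pointwise inequality $|x-(\alpha+i\beta)|>|x-\alpha|$ for real $x$ to contradict minimality of the max-product. You are in fact more careful than the paper about justifying that the \emph{maximum} strictly decreases (invoking $k>d$ so that the max is positive and hence the comparison is strict at a maximizing point), but the core argument is identical.
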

\begin{proof} Assume that $r_i=a_i+(\sqrt{-1})b_i$ with $b_i\neq0$ for some $i$.  This means that for all $x\in[x_k]$
\[|x-r_i|=\sqrt{(x-a_i)^2 + (b_i)^2}>\sqrt{(x-a_i)^2}=|x-a_i|.\]
But then
\begin{equation*}
\max_{x\in[x_k]}\left\{\prod_{i\in[d]} |x-r_i|\right\}>\max_{x\in[x_k]}\left\{\prod_{i\in[d]} |x-a_i|\right\},\end{equation*}
contradicting the assumed minimality of the expression (\ref{Exp:MinMax}) on the left-side of this inequality.  It follows then that 
\[\{r_1,\cdots,r_d\}=\{a_1,\cdots,a_d\}\subset\mathbb{R}.\]
\end{proof}
Thus we have proved that the roots of our $L$-polynomials are all real.

\begin{theorem}\label{Thm:DistinctRoots} If for $k>d\geq2$
\[Q(x)=a_d\prod_{i\in [d]}(x-r_i)\]
is a degree $d$ polynomial such that for a given
\[[x_k]=\{x_1< x_2<\cdots< x_k\}\]
we have that 
\begin{equation}\label{Exp:MinMax2}
\max_{x\in[x_k]}\left\{\prod_{i\in[d]} |x-r_i|\right\}
\end{equation}
is minimal then the multiset of the $d$ real (Lemma \ref{Thm:RealRoots}) roots of $Q(x)$ contains no duplicates for $k>d\geq2$.
\end{theorem}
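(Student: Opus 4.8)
The plan is to argue by contradiction in the same spirit as Lemma~\ref{Thm:RealRoots}: assume a minimizing polynomial $Q(x)=a_d\prod_{i\in[d]}(x-r_i)$ has a repeated real root, say $r_1=r_2=:r$, and then exhibit a perturbation of the root multiset that strictly decreases the quantity in~(\ref{Exp:MinMax2}). Write $P(x)=\prod_{i\in[d]}|x-r_i|=(x-r)^2\cdot g(x)$ where $g(x)=\prod_{i\geq 3}|x-r_i|\geq 0$, and consider replacing the pair $\{r,r\}$ by $\{r-\epsilon,\,r+\epsilon\}$ for small $\epsilon>0$. This changes the factor $(x-r)^2$ to $|(x-r)^2-\epsilon^2|$, which is \emph{strictly smaller} than $(x-r)^2$ for every $x$ with $0<|x-r|$ (and in particular for every $x\in[x_k]$, since $k>d\geq 2$ forces at least two of the $x_j$ to be unequal to $r$; and if some $x_j=r$ the product there is $0$ anyway and stays $\le\epsilon^2 g(r)$, which we can make as small as we like). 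So for $\epsilon$ small enough the new maximum over $[x_k]$ is strictly less than the old one — provided the old maximum is attained at points where the $(x-r)^2$ factor is nonzero.

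The main obstacle is the bookkeeping around where the maximum of $P$ is attained. If $x_j=r$ for some $j$, that point contributes $0$ and is irrelevant. The genuine issue is that shrinking $(x-r)^2$ near $x=r$ is fine, but we must ensure we are not simultaneously \emph{increasing} the product at the point $x^\ast\in[x_k]$ where the old max is achieved. Since at $x^\ast$ we have $(x^\ast-r)^2>0$ (otherwise the max would be $0$, making every $x_j$ a root, which is the $k\le d$ situation already excluded), the replacement strictly decreases $P(x^\ast)$; by continuity and finiteness of $[x_k]$, choosing $\epsilon$ smaller than some explicit threshold keeps $\max_{x\in[x_k]}$ strictly below its original value. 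One clean way to package this: let $m=\max_{x\in[x_k]}P(x)$ and $S=\{x\in[x_k]: P(x)=m\}$; for $x\in S$ the new value is $m - \epsilon^2 g(x) < m$, and for $x\in[x_k]\setminus S$ the new value is at most $P(x)+\epsilon^2 g(x) \le (m-\delta) + \epsilon^2\max_j g(x_j)$ where $\delta>0$ is the gap to the second-largest value; pick $\epsilon^2 < \delta/\max_j g(x_j)$.

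A subtlety to address: the perturbed multiset $\{r-\epsilon,r+\epsilon,r_3,\dots,r_d\}$ still has $d$ elements and still lies in $\mathbb{R}$, so the perturbed $Q$ is an admissible competitor of the same degree; its lead coefficient is unchanged but its $\max$-of-products is strictly smaller, contradicting minimality. One should also note the perturbation keeps the roots real (no appeal to Lemma~\ref{Thm:RealRoots} needed for the competitor, since $r\pm\epsilon\in\mathbb{R}$), so there is no circularity. Finally, if there are several repeated roots, it suffices to fix one repeated pair at a time; a single strict decrease already contradicts minimality. I expect the write-up to be short — essentially the displayed inequality $|(x-r)^2-\epsilon^2|<(x-r)^2$ for $0<|x-r|$ plus the finite-max argument above — with the only care needed being the explicit choice of $\epsilon$.
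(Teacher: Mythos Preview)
Your proof is correct and takes essentially the same approach as the paper: both replace the repeated root $r$ by the pair $r\pm\varepsilon$ and verify that for $\varepsilon$ small enough the maximum of $\prod_i|x-r_i|$ over $[x_k]$ strictly decreases, contradicting minimality. The only difference is bookkeeping---the paper fixes $\varepsilon$ via three explicit cases ($x<r$, $x>r$, and $x=r\in[x_k]$) with separate thresholds $\varepsilon_1,\varepsilon_2,\varepsilon_3$, while you package the same estimate through a gap argument on the finite set $[x_k]$; the substance is identical.
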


\begin{proof}Assume $Q(x)$ has a root of multiplicity greater than one.  If we make no claim about the the ordering of the $r_i$ we can, without loss of generality, set 
\[r=r_1=r_2\]
and rewrite the product of distances (i.e. the factors in our polynomial) as
\[|x-r|^2|x-r_3|\ldots|x-r_d|.\]
We will show that replacing the two identical $r$'s with two distinct roots, $r+\varepsilon$ and $r-\varepsilon$, yields a smaller maximum product (\ref{Exp:MinMax2}), a contradiction.  We choose $\varepsilon$ to be the minimum  of a subset formed from $\varepsilon_1$, $\varepsilon_2$ and $\varepsilon_3$, each of which is defined for a separate case of $x$:
\begin{description}
\item[Case I] Set \[\varepsilon_1=\min\left\{\frac{x-r}{2} : x\in[x_k]\text{ and } x>r \right\}\]
and thus when $x>r$ and $x\in{[}x_k{]}$
\begin{align*}
		|x-(r+\varepsilon_1)||x-(r-\varepsilon_1)|&=(x-r)^2-\varepsilon_1^2\\
		&<|x-r|^2.
\end{align*}
Now multiplying both sides of the inequality by the other roots' distances to $x$ (i.e. the absolute values of $Q(x)$'s factors) yields
\begin{align*}
|x-(r+\varepsilon_1)||x-(r-\varepsilon_1)||x-r_3|\ldots|x-r_d|<\\
|x-r|^2|x-r_3|\ldots|x-r_d|
\end{align*}
when $x>r$ and $x\in{[}x_k{]}$.
\item[Case II]  Set \[\varepsilon_2=\min\left\{\frac{r-x}{2}: x\in[x_k]\text{ and } x<r\right\}\]
and thus when $x<r$ and $x\in{[}x_k{]}$ 
	\begin{align*}
		|x-(r+\varepsilon_2)||x-(r-\varepsilon_2)|&=(r-x)^2-\varepsilon_2^2\\
		&<|x-r|^2.
	\end{align*}
Now multiplying both sides of the inequality by the other roots' distances to $x$ (i.e. the absolute values of $Q(x)$'s factors) yields
\begin{align*}
|x-(r+\varepsilon_2)||x-(r-\varepsilon_2)||x-r_3|\ldots|x-r_d|<\\
|x-r|^2|x-r_3|\ldots|x-r_d|
\end{align*}
when $x<r$ and $x\in{[}x_k{]}$.
\item[Case III] If $r\in[x_k]$ and $r$ is a root of multiplicity two set 
\[\varepsilon_3=\sqrt{\frac{1}{2}\frac{\max_{x\in [x_k]}\left\{|x-r|^2|x-r_3|\cdots|x-r_d|\right\}}{|r-r_3|\cdots|r-r_d|}}\]
and thus when $x=r$ and $x\in{[}x_k{]}$
\begin{align*}	
|x-(r+\varepsilon_3)||x-(r-\varepsilon_3)|=&\varepsilon_3^2\\
=&\frac{1}{2}\frac{\max_{x\in [x_k]}\left\{|x-r|^2|x-r_3|\cdots|x-r_d|\right\}}{|r-r_3|\cdots|r-r_d|}\\
<&\frac{\max_{x\in [x_k]}\left\{|x-r|^2|x-r_3|\cdots|x-r_d|\right\}}{|r-r_3|\cdots|r-r_d|}.
\end{align*}
Now multiplying both sides of the inequality by the other roots' distances to $x=r$ (i.e. the absolute values of $Q(x)$'s factors) yields
\begin{multline*}
|x-(r+\varepsilon_3)||x-(r-\varepsilon_3)||x-r_3|\ldots|x-r_d|<\\
\max_{x\in [x_k]}\left\{|x-r|^2|x-r_3|\cdots|x-r_d|\right\}
\end{multline*}
when $x=r$ and $x\in{[}x_k{]}$	
\end{description}
Now we choose $\varepsilon$.  If $r\in[x_k]$ and $r$ is a root of multiplicity two set $\varepsilon=\min\left\{\varepsilon_1,\varepsilon_2,\varepsilon_3\right\}$, otherwise set $\varepsilon=\min\{\varepsilon_1,\varepsilon_2\}$. Using $\varepsilon$ in the final inequality of each of the above three cases yields
\begin{multline*}
\max_{x\in [x_k]}\left\{|x-(r+\varepsilon)||x-(r-\varepsilon)||x-r_3|\cdots|x-r_d|\right\}<\\
\max_{x\in [x_k]}\left\{|x-r|^2|x-r_3|\cdots|x-r_d|\right\},
\end{multline*}
contradicting the minimality of (\ref{Exp:MinMax2}).
\end{proof}
Thus we have proved that the roots of our $L$-polynomials are distinct.

\begin{lemma}\label{Thm:RootsInterval} If for $k>d\geq2$ 
\[Q(x)=a_d\prod_{i\in [d]}(x-r_i)\]
is a degree $d$ polynomial such that for a given
\[[x_k]=\{x_1< x_2<\cdots< x_k\}\]
we have that 
\begin{equation}\label{Exp:MinMax3}
\max_{x\in[x_k]}\left\{\prod_{i\in[d]} |x-r_i|\right\}
\end{equation}
is minimal then the set $\{r_1<r_2<\cdots<r_d\}$ of $d$ distinct (Theorem \ref{Thm:DistinctRoots}) roots of $Q(x)$ is contained in the open interval $(x_1,x_k)$.
\end{lemma}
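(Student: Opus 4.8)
The plan is to argue by contradiction and perturbation, exactly in the spirit of Lemma~\ref{Thm:RealRoots} and Theorem~\ref{Thm:DistinctRoots}: if some root $r$ of $Q$ fails to lie in the open interval $(x_1,x_k)$, I would nudge it toward $[x_1,x_k]$ and check that the maximum product (\ref{Exp:MinMax3}) strictly decreases, contradicting its assumed minimality. The transformation $x\mapsto -x$, $r_i\mapsto -r_i$ carries a minimizing instance to another minimizing instance, preserves the quantity (\ref{Exp:MinMax3}), and interchanges the roles of the two endpoints of the ambient set; so it is enough to rule out a root $r$ with $r\le x_1$, the case $r\ge x_k$ following by this symmetry.

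Before the cases I would record one preliminary fact: the minimal value $M:=\max_{x\in[x_k]}\prod_{i\in[d]}|x-r_i|$ is strictly positive. Indeed $Q$ has at most $d$ distinct roots while $|[x_k]|=k>d$, so at least one $x_j$ is not a root of $Q$, and the product at that $x_j$ is positive. Also, by Theorem~\ref{Thm:DistinctRoots} the roots are distinct, so a root equal to $x_1$ is simple. Now suppose $r\le x_1$ is a root, and split according to whether this inequality is strict.

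If $r<x_1$, pick $0<\varepsilon<x_1-r$ and replace $r$ by $r+\varepsilon$. For every $x\in[x_k]$ one has $r<r+\varepsilon<x_1\le x$, hence $|x-(r+\varepsilon)|=x-r-\varepsilon<x-r=|x-r|$; multiplying through by the unchanged factors $|x-r_j|$ shows that at each point of $[x_k]$ the product either strictly decreases (where it was positive) or stays equal to $0$, and since $M>0$ the new maximum is therefore strictly below $M$ — a contradiction. If $r=x_1$, again replace the (simple) root $r$ by $r+\varepsilon$ with $\varepsilon>0$. For each $x\ge x_2$, choosing $\varepsilon<x_2-x_1$ gives $0<x-x_1-\varepsilon<x-x_1$ and the same computation yields, point by point, a strict decrease where the product was positive and the value $0$ where it was $0$. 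At the point $x=x_1$ itself the original product is $0$, while the new product equals $\varepsilon\prod_{j\ne i}|x_1-r_j|$, a positive quantity (the roots being distinct) that tends to $0$ as $\varepsilon\to0$; choosing $\varepsilon$ small enough that this is $<M$, every point of $[x_k]$ carries a new product value $<M$, so the new maximum is $<M$ — again a contradiction. Taking $\varepsilon$ to be the minimum of the finitely many bounds imposed above makes all estimates hold at once.

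The only delicate point — and the place I expect a careful write-up to need it — is the boundary sub-case $r=x_1$ (equivalently $r=x_k$): there, pushing the root inward does not lower the product uniformly over $[x_k]$, since the value at $x_1$ rises from $0$ to something positive. The resolution is quantitative: that rise is only $O(\varepsilon)$ whereas the competing maximum $M$ is a \emph{fixed} positive number, and it is precisely the two facts $M>0$ and ``the root at $x_1$ is simple'' (consequences of $k>d$ and Theorem~\ref{Thm:DistinctRoots}) that make the rise harmlessly small. Once this case is handled, the symmetry $x\mapsto -x$ disposes of any root $r\ge x_k$, and the lemma follows.
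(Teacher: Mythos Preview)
Your argument is correct and follows essentially the same perturbation strategy as the paper: assume a root lies outside $(x_1,x_k)$, push it toward the interior, and verify that the maximum product (\ref{Exp:MinMax3}) strictly drops. The paper treats the right endpoint, writing $r_d=x_k+\delta$ with $\delta\ge 0$ and replacing $r_d$ by $x_k-\varepsilon$ in a single move (splitting into the two cases $x\in[x_{k-1}]$ and $x=x_k$ to choose $\varepsilon$), whereas you treat the left endpoint, separate the sub-cases $r<x_1$ and $r=x_1$, and merely nudge the root by $\varepsilon$ rather than jumping it inside; these are cosmetic differences, and your explicit isolation of why the boundary sub-case needs $M>0$ and simplicity of the root is a nice touch.
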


\begin{proof}
Suppose that one of the roots is outside of the open interval $(x_1,x_k)$, that is for some $\delta\geq 0$ we have $r_d=x_k+\delta$.  We will show that replacing the root $r_d$ with $x_k-\varepsilon$ for some $\varepsilon>0$ yields a smaller maximum product (\ref{Exp:MinMax3}), a contradiction.  And we will choose $\varepsilon$ to be the minimum  of $\varepsilon_1$ and $\varepsilon_2$ which are now defined for two cases of $x$:
\begin{description}
\item[Case I] Set 
\[\varepsilon_1=\frac{x_k-x_{k-1}}{2}\]
and thus for any $\delta\geq0$ and all $x\in\{x_1,\cdots,x_{k-1}\}=[x_{k-1}]$ 
\begin{align*}
|x-r_d| = |x-(x_k+\delta)| = (x_k+\delta)-x > \left(\frac{x_k+x_{k-1}}{2}\right)-x=\\
\left|x-\left(x_k-\frac{x_k-x_{k-1}}{2}\right)\right|=|x-(x_k-\varepsilon_1)|.
\end{align*}
Now multiplying both sides of the inequality by the other roots' distances to $x$ (i.e. the absolute values of $Q(x)$'s factors) yields
\[|x-r_1|\cdots|x-r_{d-1}||x-r_d|>|x-r_1|\cdots|x-r_{d-1}||x-(x_k-\varepsilon_1)|\]
for $x\in[x_{k-1}]$.
\item[Case II] Set 
\[\varepsilon_2=\frac{1}{2}\frac{\max_{x\in [x_k]}\{|x-r_1|\cdots|x-r_d|\}}{|x_k-r_1|\cdots|x_k-r_{d-1}|}\]
and thus for $x=x_k$
\begin{align*}	
|x-(x_k-\varepsilon_2)|=&\varepsilon_2\\
=&\frac{1}{2}\frac{\max_{x\in [x_k]}\{|x-r_1|\cdots|x-r_d|\}}{|x_k-r_1|\cdots|x_k-r_{d-1}|}\\
<&\frac{\max_{x\in [x_k]}\{|x-r_1|\cdots|x-r_d|\}}{|x_k-r_1|\cdots|x_k-r_{d-1}|.}
\end{align*}
Now multiplying both sides of the inequality by the other roots' distances to $x=x_k$ (i.e. the absolute values of $Q(x)$'s factors) yields
\begin{align*}
|x-r_1|\ldots|x-r_{d-1}||x-(x_k-\varepsilon_2)|<
\max_{x\in [x_k]}\{|x-r_1|\cdots|x-r_d|\}
\end{align*}
for $x=x_k$.	
\end{description}
Now set $\varepsilon=\min\{\varepsilon_1,\varepsilon_2\}$ and thus using $\varepsilon$ in the final inequality of each of the above two cases yields
\begin{align*}
\max_{x\in [x_k]}\{|x-r_1|\ldots|x-r_{d-1}||x-(x_k-\varepsilon)|\}<
\max_{x\in [x_k]}\{|x-r_1|\cdots|x-r_d|\}.
\end{align*}
contradicting the minimality of (\ref{Exp:MinMax3}).
The same arguments will work for attempting to place a root to the left of the interval $(x_1,x_k)$ (i.e. if for some $\delta\geq 0$ we have $r_1=x_1-\delta$).
\end{proof}

Thus we have proved that the roots of our $L$-polynomials are in the interval $(x_1,x_k)$.  We can now say that the roots of our $L$-polynomials are distinct real numbers of multiplicity one in the interval $(x_1,x_k)$ by putting Lemma \ref{Thm:RealRoots}, Theorem \ref{Thm:DistinctRoots} and Lemma \ref{Thm:RootsInterval} together.

\end{section}

\begin{section}{Some More Useful Theorems (Not from the Combinatorial Geometry Perspective  )}\label{Sec:MoreTheorems}

First we show that a degree $d$ $L$-polynomial bounded between $-1$ and $1$ for $x\in[x_k]$ must pass through the points $(x_1,(-1)^d)$ and $(x_k,1)$, on both sides of the set of boundary points $\{(x,\pm1):x\in[x_k]\}$

\begin{lemma}\label{Thm:TerminalPoints} 
If for $k>d\geq2$
\[Q(x)=a_d\prod_{i\in [d]}(x-r_i)\]
is a degree $d$ polynomial with maximum lead coefficient $a_d$ such that
\[|Q(x)|\leq 1\]
when
\[x\in[x_k]=\{x_1< x_2<\cdots< x_k\}\]
then
 \[Q(x_1)=(-1)^d\] and \[Q(x_k)=1.\] \end{lemma}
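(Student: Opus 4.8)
The plan is to argue by contradiction, showing that if $Q$ fails to hit the extreme value $\pm 1$ with the correct sign at an endpoint, then we can perturb $Q$ to increase its lead coefficient while keeping $|Q| \le 1$ on $[x_k]$, contradicting maximality. By the results of Section \ref{Sec:CombinatorialPerspective} (Lemma \ref{Thm:RealRoots}, Theorem \ref{Thm:DistinctRoots}, Lemma \ref{Thm:RootsInterval}), the roots $r_1 < r_2 < \cdots < r_d$ of a maximal $Q$ are distinct reals lying in the open interval $(x_1, x_k)$. Consequently $x_1$ lies to the left of every root, so $Q(x_1) = a_d \prod_i (x_1 - r_i)$ has sign $(-1)^d$ (each factor $x_1 - r_i$ is negative), and $x_k$ lies to the right of every root, so $Q(x_k) = a_d \prod_i (x_k - r_i) > 0$. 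Thus it suffices to show $|Q(x_1)| = 1$ and $|Q(x_k)| = 1$; the signs are then automatic.

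First I would treat the endpoint $x_k$. Suppose $|Q(x_k)| < 1$. I want to scale $Q$ up by a factor $c > 1$, but this may violate $|cQ(x_j)| \le 1$ at other points $x_j$ where $|Q(x_j)| = 1$. The fix is to simultaneously nudge the largest root $r_d$ toward $x_k$: replacing $r_d$ by $r_d + \eta$ for small $\eta > 0$ strictly decreases $|x_j - r_d|$ for every $x_j \in [x_k]$ with $x_j \le r_d$ — but one must be careful about $x_j$ between $r_d$ and $x_k$. A cleaner route: define $\widetilde{Q}(x) = c \cdot a_d \prod_{i < d}(x - r_i) \cdot (x - \widetilde{r}_d)$ where $\widetilde{r}_d \in (r_{d-1}, x_k)$ is chosen slightly larger than $r_d$ and $c > 1$. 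For each of the finitely many points $x_j \in [x_k]$ one checks: if $x_j < \widetilde r_d$ then $|x_j - \widetilde r_d| < |x_j - r_d|$, and by choosing $\widetilde r_d$ close enough to $r_d$ and $c$ close enough to $1$ the product stays $\le 1$ (using that there are finitely many constraints, and strict inequality $|Q(x_j)| < 1$ at the remaining points gives room); if $x_j \ge \widetilde r_d$, which for $\widetilde r_d$ near $r_d$ means $x_j = x_k$ essentially, we use the strictness $|Q(x_k)| < 1$ directly. Since $\widetilde Q$ has lead coefficient $c\, a_d > a_d$, this contradicts maximality, so $|Q(x_k)| = 1$, hence $Q(x_k) = 1$. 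The argument for $x_1$ is the mirror image: if $|Q(x_1)| < 1$, push the smallest root $r_1$ leftward toward $x_1$ and scale up, yielding $Q(x_1) = (-1)^d$.

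The main obstacle is making the perturbation genuinely simultaneous and rigorous: I must exhibit a single choice of $(c, \widetilde r_d)$ that respects \emph{all} $k$ constraints at once. The right way to organize this is a compactness/continuity argument — the map $(\delta_1, \delta_d) \mapsto \max_{x \in [x_k]} |c(\delta)\, a_d \prod(x - r_i^{(\delta)})|$ is continuous, equals $< 1$ at $\delta = 0$ only at the non-tight points, so one perturbs only in directions that decrease the value at the tight points while the slack at $x_k$ absorbs the scaling. Alternatively, and perhaps more in the spirit of the earlier proofs in this paper, I would split into the explicit case where $x_k$ (resp. $x_1$) is the unique tight point versus the case where some interior $x_j$ is also tight, and in the latter case combine a root-perturbation bounded by a quantity analogous to the $\varepsilon_1, \varepsilon_2$ of Theorem \ref{Thm:DistinctRoots} with the scaling factor $c$ bounded in terms of the slack $1 - |Q(x_k)|$. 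I expect the write-up to mirror closely the $\varepsilon$-bookkeeping already used in Lemma \ref{Thm:RootsInterval}, just with the extra multiplicative parameter $c$.
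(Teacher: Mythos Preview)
Your overall strategy---contradiction, use the root structure from Section~\ref{Sec:CombinatorialPerspective}, perturb to increase $a_d$---matches the paper, but the core perturbation has the inequality backwards. You claim that replacing $r_d$ by $r_d+\eta$ (pushing it toward $x_k$) ``strictly decreases $|x_j-r_d|$ for every $x_j\in[x_k]$ with $x_j\le r_d$.'' This is false: for $x_j<r_d<r_d+\eta$ one has $|x_j-(r_d+\eta)|=r_d+\eta-x_j>r_d-x_j=|x_j-r_d|$. The same error reappears in the next paragraph (``if $x_j<\widetilde r_d$ then $|x_j-\widetilde r_d|<|x_j-r_d|$''). Moving $r_d$ to the right \emph{increases} the product $\prod_i|x_j-r_i|$ at every point $x_j$ to the left of $r_d$, which is precisely where the tight constraints may live; your perturbation therefore pushes $|Q|$ above $1$ at exactly the points you need to keep it below $1$. (Moving $r_d$ \emph{left} would repair the direction, but then you must also control points between $r_{d-1}$ and the new root, and between the old $r_d$ and $x_k$; the $(c,\widetilde r_d)$ bookkeeping you anticipate would have to be redone from scratch.)

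The paper sidesteps this geometric tangle with an additive rather than root-moving perturbation: it sets $\hat Q(x)=Q(x)+\varepsilon(x-r_1)\cdots(x-r_{d-1})$, which keeps the lead coefficient equal to $a_d$ and can be rewritten as $\hat Q(x)=\bigl(1+\tfrac{\varepsilon}{a_d(x-r_d)}\bigr)Q(x)$ for $x\ne r_d$. The multiplicative factor is strictly less than $1$ for $x<r_d$ (handling all potentially tight points at once) and slightly larger than $1$ for $x>r_d$ (absorbed by the assumed slack $Q(x_k)<1$, since $Q$ is increasing to the right of $r_d$); a separate case covers $x=r_d\in[x_k]$. Having achieved $|\hat Q|<1$ strictly on all of $[x_k]$, a single scalar $\lambda>1$ then raises the lead coefficient. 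This cleanly decouples ``shrink at tight points'' from ``scale up,'' which is exactly the step your proposal leaves entangled.
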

 
\begin{proof} Let us suppose that $Q(x_k)<1$. We know that $Q(x)$ has $d$ distinct roots and that they are contained in the interval $(x_1,x_k)$, from Theorem \ref{Thm:DistinctRoots} and Lemma \ref{Thm:RootsInterval} respectively.  In other words, for some
\[x_1<r_1<r_2<\cdots<r_{d-1}<r_d<x_k,\]
we have
\[Q(x)=a_d(x-r_1)\cdots(x-r_d).\]
Now with some soon to be determined $\varepsilon$ we define 
\[\hat{Q}(x)=Q(x)+\varepsilon (x-r_1)\cdots(x-r_{d-1}).\]
So for $x\neq r_d$ we can write
\[\hat{Q}=\left(1+\frac{\varepsilon}{a_d(x-r_d)}\right)Q(x).\]
We will use this $\hat{Q}(x)$ to contradict the maximality of $Q(x)$'s lead coefficient.  We will choose $\varepsilon$ to be the minimum  of a subset of $\varepsilon_1$, $\varepsilon_2$ and $\varepsilon_3$, which are now defined for three cases of $x$:

\begin{description}
\item[Case I] Set
\[\varepsilon_1=\frac{a_d}{2}(\min\left\{(r_d-x):x<r_d\text{ and }x\in[x_k]\right\})\]
and thus for $x<r_d$ and $x\in[x_k]$
\begin{multline*}
\left|\left(1+\frac{\varepsilon_1}{a_d(x-r_d)}\right)Q(x)\right|=\\
\left|\left(1-\frac{1}{2}\frac{(\min\left\{(r_d-x):x<r_d\text{ and }x\in[x_k]\right\})}{(r_d-x)}\right)Q(x)\right|<\left|Q(x)\right|\leq 1
\end{multline*}
  
\item[Case II]Since $Q(x_k)>0$ by Lemma \ref{Thm:RootsInterval}, we can set
\[\varepsilon_2=\frac{a_d}{2}\left(\frac{1}{Q(x_k)}-1\right)(\min\left\{\left(x-r_d)\right):x>r_d\text{ and }x\in[x_k]\right\}).\]
Recall that we assumed $Q(x_k)<1$ and thus for $x>r_d$ and $x\in[x_k]$
\begin{multline*}
\left|\left(1+\frac{\varepsilon_2}{a_d(x-r_d)}\right)Q(x)\right|=\\
\left(1+\frac{1}{2}\frac{(\min\left\{(x-r_d):x>r_d\text{ and }x\in[x_k]\right\})}{(x-r_d)}\left(\frac{1}{Q(x_k)}-1\right)\right)\left|Q(x)\right|\leq\\
\left(1+\frac{1}{2}\left(\frac{1}{Q(x_k)}-1\right)\right)\left|Q(x)\right|.
\end{multline*}
Now since $Q(x)$ is a polynomial with positive lead coefficient, it increases to the right of its largest root $r_d$. So continuing the above inequality we have for $x>r_d$ and $x\in[x_k]$
\begin{multline*}
\left(1+\frac{1}{2}\left(\frac{1}{Q(x_k)}-1\right)\right)\left|Q(x)\right|\leq\left(1+\frac{1}{2}\left(\frac{1}{Q(x_k)}-1\right)\right)Q(x_k)=\\
\frac{Q(x_k)+1}{2}<1
\end{multline*}
\item[Case III]If $r_d\in[x_k]$ set
\[\varepsilon_3=\frac{1}{2}\frac{1}{(r_d-r_1)\cdots(r_d-r_{d-1})}\]
and thus for $x=r_d$ and $x\in[x_k]$
\begin{align*}
|Q(x)+\varepsilon_3 (x-r_1)\cdots(x-r_{d-1})|=&\\
|Q(r_d)+\varepsilon_3 (r_d-r_1)\cdots(r_d-r_{d-1})|=&\left|Q(r_d)+\frac{1}{2}\right|=0+\frac{1}{2}<1
\end{align*}
Otherwise, if $r_d\notin[x_k]$, set
\[\varepsilon_3=1\]
\end{description}
Set $\varepsilon=\min\left\{\varepsilon_1,\varepsilon_2,\varepsilon_3\right\}$.  Using $\varepsilon$ for the inequalities in the above three cases gives us that
\[\left|\hat{Q}(x)\right|<1\]
for $x\in[x_k]$.  Clearly the lead coefficients of $\hat{Q}(x)$ and $Q(x)$ are equal since we defined $\hat{Q}(x)$ as $Q(x)$ plus a degree $d-1$ polynomial. But since $|\hat{Q}(x)|$ is strictly less than one we can have some $\lambda>1$ such that
\[\left|\lambda\hat{Q}(x)\right|\leq 1\]
for $x\in[x_k]$. But then $\lambda a_d$, the lead coefficient of $\lambda\hat{Q}$ is greater than $a_d$. This contradicts the maximality of the lead coefficient $a_d$ of $Q(x)$.  The same arguments work to show that $Q(x_1)=(-1)^d$.  
\end{proof}
Next we prove that $L_{d,[x_k]}$ passes through some point $(x_i,\pm1)$ between any two of its consecutive roots.

\begin{lemma}\label{Thm:TouchingBetweenRoots}
If for $k>d\geq2$
\[Q(x)=a_d\prod_{i\in [d]}(x-r_i)\]
is a degree $d$ polynomial with maximum lead coefficient $a_d$ such that
\[|Q(x)|\leq 1\]
when
\[x\in[x_k]=\{x_1< x_2<\cdots< x_k\}\]
then for any two of $Q(x)$'s consecutive roots $r_i$ and $r_{i+1}$ there exists an $x^\prime \in[x_k]$ such that 
\[r_i<x^\prime <r_{i+1}\]
and 
\[|Q(x^\prime)|=1.\] \end{lemma}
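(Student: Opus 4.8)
The plan is to argue by contradiction, perturbing the two roots $r_i<r_{i+1}$ \emph{away} from each other while holding the lead coefficient fixed, exactly in the spirit of the proofs of Theorem~\ref{Thm:DistinctRoots} and Lemma~\ref{Thm:TerminalPoints}. By Lemma~\ref{Thm:RealRoots}, Theorem~\ref{Thm:DistinctRoots} and Lemma~\ref{Thm:RootsInterval} the $r_i$ are genuine distinct real numbers in $(x_1,x_k)$, so consecutive roots make sense. Suppose the conclusion fails for some such pair; that is, every $x\in[x_k]$ with $r_i<x<r_{i+1}$ satisfies $|Q(x)|<1$ (this statement is vacuously true if no point of $[x_k]$ lies strictly between $r_i$ and $r_{i+1}$). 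For $\varepsilon>0$ set
\[\hat{Q}(x)=a_d\,(x-(r_i-\varepsilon))(x-(r_{i+1}+\varepsilon))\prod_{j\neq i,i+1}(x-r_j),\]
which has the same lead coefficient $a_d$ as $Q$. Writing $m=\tfrac{r_i+r_{i+1}}{2}$ and $s=\tfrac{r_{i+1}-r_i}{2}>0$, one checks that for $x\neq r_i,r_{i+1}$,
\[\hat{Q}(x)=Q(x)\cdot\frac{(x-m)^2-(s+\varepsilon)^2}{(x-m)^2-s^2}.\]
The key point is that this correction factor has absolute value below $1$ when $x$ lies outside $(r_i,r_{i+1})$ and above $1$ when $x$ lies inside it, and the hypothesis that nothing in $[x_k]$ touches $\pm1$ strictly between the roots is precisely what makes the inside increase harmless.

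For the points of $[x_k]$ outside $(r_i,r_{i+1})$: if $x\le r_i$ or $x\ge r_{i+1}$ with $x\neq r_i,r_{i+1}$, then $(x-m)^2>s^2$, and once $\varepsilon$ is small enough that $s+\varepsilon<|x-m|$ for all such $x$ (possible since $[x_k]$ is finite), the ratio above is strictly less than $1$, giving $|\hat{Q}(x)|<|Q(x)|\le 1$; when $x$ is one of the unchanged roots $r_j$ this is trivial since $\hat{Q}(x)=0$. For the at most two exceptional points $x=r_i$ or $x=r_{i+1}$ that might lie in $[x_k]$, $Q$ vanishes but $|\hat{Q}(x)|=a_d\,\varepsilon\,(2s+\varepsilon)\prod_{j\neq i,i+1}|x-r_j|\to 0$ as $\varepsilon\to 0$, so this is $<1$ for $\varepsilon$ small.

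For the finitely many points $x\in[x_k]$ with $r_i<x<r_{i+1}$ (if any): here $(x-m)^2<s^2$ and $|\hat{Q}(x)|=|Q(x)|\bigl(1+\tfrac{2s\varepsilon+\varepsilon^2}{s^2-(x-m)^2}\bigr)$. Since there are finitely many such $x$, none of them a root of $Q$, the denominators $s^2-(x-m)^2$ are bounded below by a positive constant and $|Q(x)|\le\mu:=\max\{|Q(x)|:x\in[x_k],\ r_i<x<r_{i+1}\}<1$, where the strict inequality is exactly the negated conclusion; hence for $\varepsilon$ small enough the correction factor stays below $1/\mu$ and $|\hat{Q}(x)|<1$. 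Choosing a single $\varepsilon>0$ satisfying all the finitely many smallness conditions above yields $|\hat{Q}(x)|<1$ for every $x\in[x_k]$, so $\max_{x\in[x_k]}|\hat{Q}(x)|=c<1$; then $\tfrac1c\hat{Q}$ is bounded by $1$ on $[x_k]$ and has lead coefficient $\tfrac{a_d}{c}>a_d$, contradicting the maximality of $a_d$.

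The argument has no deep obstacle; the only real care needed is organizational — packaging the subcase ``$r_i$ or $r_{i+1}\in[x_k]$'' together with the possibly empty ``strictly between'' subcase so that one value of $\varepsilon$ handles everything. As in the earlier lemmas, I would therefore present $\varepsilon$ as $\min\{\varepsilon_1,\varepsilon_2,\varepsilon_3\}$ for explicit thresholds $\varepsilon_1,\varepsilon_2,\varepsilon_3$ attached to the ``outside'', ``at a moved root'', and ``strictly between'' cases respectively, and then feed that $\varepsilon$ back into the three inequalities.
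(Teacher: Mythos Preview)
Your argument is correct and is essentially the paper's own proof in a different dress: spreading $r_i,r_{i+1}$ outward by $\varepsilon$ yields $\hat Q(x)=Q(x)-a_d(2s\varepsilon+\varepsilon^2)\prod_{j\neq i,i+1}(x-r_j)$, which is exactly the paper's additive perturbation $\check Q(x)=Q(x)-\varepsilon'\prod_{j\neq i,i+1}(x-r_j)$ under the reparametrization $\varepsilon'=a_d(2s\varepsilon+\varepsilon^2)$. The four-case analysis in the paper (outside, strictly between, $x=r_i$, $x=r_{i+1}$) lines up with your three cases, and both conclude by scaling the strictly bounded $\hat Q$ up past $a_d$.
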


\begin{proof} Suppose that $|Q(x)|<1$ for any and all $x\in[x_k]$ where $r_i<x<r_{i+1}$.  We know that $Q(x)$ has $d$ distinct roots and that they are contained in the interval $(x_1,x_k)$, from Theorem \ref{Thm:DistinctRoots} and Lemma \ref{Thm:RootsInterval} respectively.  In other words, for some
\[x_1<r_1<r_2<\cdots<r_{d-1}<r_d<x_k,\]
we have
\[Q(x)=a_d(x-r_1)\cdots(x-r_d).\]
Now with some soon to be determined $\varepsilon>0$ we define 
\begin{align*}
\check{Q}(x)&=Q(x)-\varepsilon (x-r_1)\cdots (x-r_{i-1})(x-r_{i+2})\cdots(x-r_{d}).
\end{align*}
So for $x\notin\{r_i,r_{i+1}\}$ and $x\in[x_k]$ we can write
\[\check{Q}(x)=\left(1-\frac{\varepsilon}{a_d(x-r_i)(x-r_{i+1})}\right)Q(x).\]
We will use this $\check{Q}(x)$ to contradict the maximality of $Q(x)$'s lead coefficient.  We will choose $\varepsilon$ to be the minimum  of $\varepsilon_1$, $\varepsilon_2$, $\varepsilon_3$ and $\varepsilon_4$, which are now defined for four cases of $x$:

\begin{description}
\item[Case I] Set
\[\varepsilon_1=\frac{a_d}{2}(\min\left\{(x-r_i)(x-r_{i+1}):x\in[x_k]\text{ and }(x<r_i\text{ or }r_{i+1}<x)\right\})\]
and thus for ($x<r_1$ or $r_{i+1}<x$) and $x\in[x_k]$
\begin{multline*}
\left|\left(1-\frac{\varepsilon_1}{a_d(x-r_i)(x-r_{i+1})}\right)Q(x)\right|=\\
\left|\left(1-\frac{1}{2}\frac{(\min\left\{(x-r_i)(x-r_{i+1}):x\in[x_k]\text{ and }(x<r_i\text{ or }r_{i+1}<x)\right\})}{(x-r_i)(x-r_{i+1})}\right)Q(x)\right|<\left|Q(x)\right|\leq 1
\end{multline*}

\item[Case II] If there exists an $x\in[x_k]$ such that $r_i<x<r_{i+1}$ then first set \[Q_{\max}=\max\left\{|Q(x)|:x\in[x_k]\text{ and }r_i<x<r_{i+1}\right\}.\]
Recall that we assumed that $|Q(x)|<1$ for any and all $x\in[x_k]$ where $r_i<x<r_{i+1}$.  This means $0<Q_{\max}<1$.
Next set 
\[\varepsilon_2=\frac{a_d}{2}\left(\frac{1}{Q_{max}}-1\right)(\min\left\{-(x-r_i)(x-r_{i+1}):x\in[x_k]\text{ and }r_i<x<r_{i+1})\right\}).\]
\begin{multline*}
\left|\left(1-\frac{\varepsilon_2}{a_d(x-r_i)(x-r_{i+1})}\right)Q(x)\right|=\\
\left(1-\frac{1}{2}\frac{(\min\left\{(x-r_i)(x-r_{i+1}):x\in[x_k]\text{ and }r_i<x<r_{i+1})\right\})}{(x-r_i)(x-r_{i+1})}\left(\frac{1}{Q_{max}}-1\right)\right)\left|Q(x)\right|\leq\\
\left(1+\frac{1}{2}\left(\frac{1}{Q_{max}}-1\right)\right)\left|Q(x)\right|\leq\left(1+\frac{1}{2}\left(\frac{1}{Q_{max}}-1\right)\right)Q_{max}=\frac{Q_{max}+1}{2}<1
\end{multline*}
Otherwise, if there does not exist an $x\in[x_k]$ such that $r_i<x<r_{i+1}$, set $\varepsilon_2=1$

\item[Case III]If $r_i\in[x_k]$ set
\[\varepsilon_3=\frac{1}{2}\frac{1}{(r_i-r_1)\cdots (r_i-r_{i-1})(r_i-r_{i+2})\cdots(r_i-r_{d})}\]
and thus for $x=r_i$ and $x\in[x_k]$
\begin{align*}
|Q(x)+\varepsilon_3 (x-r_1)\cdots (x-r_{i-1})(x-r_{i+2})\cdots(x-r_{d})|=&\\
|Q(r_i)+\varepsilon_3 (r_i-r_1)\cdots (r_i-r_{i-1})(r_i-r_{i+2})\cdots(r_i-r_{d})|=&\left|Q(r_i)+\frac{1}{2}\right|=0+\frac{1}{2}<1.
\end{align*}
Otherwise, if $r_i\notin[x_k]$, set
\[\varepsilon_3=1\]

\item[Case IV]If $r_{i+1}\in[x_k]$ set
\[\varepsilon_4=\frac{1}{2}\frac{1}{(r_{i+1}-r_1)\cdots (r_{i+1}-r_{i-1})(r_{i+1}-r_{i+2})\cdots(r_{i+1}-r_{d})}\]
and thus for $x=r_{i+1}$ and $x\in[x_k]$
\begin{align*}
|Q(x)+\varepsilon_4 (x-r_1)\cdots (x-r_{i-1})(x-r_{i+2})\cdots(x-r_{d})|=&\\
|Q(r_{i+1})+\varepsilon_4 (r_{i+1}-r_1)\cdots (r_{i+1}-r_{i-1})(r_{i+1}-r_{i+2})\cdots(r_{i+1}-r_{d})|=&\left|Q(r_{i+1})+\frac{1}{2}\right|=0+\frac{1}{2}<1.
\end{align*}
Otherwise, if $r_{i+1}\notin[x_k]$, set
\[\varepsilon_4=1\]
\end{description}
Now set $\varepsilon=\min\{\varepsilon_1,\varepsilon_2,\varepsilon_3,\varepsilon_4\}$.  Using $\varepsilon$ in each of the above four cases yields
\[\left|\check{Q}(x)\right|<1\]
for $x\in[x_k]$.  Clearly the lead coefficients of $\check{Q}(x)$ and $Q(x)$ are equal since we have defined $\check{Q}(x)$ as $Q(x)$ plus a degree $d-2$ polynomial. But since $|\check{Q}(x)|$ is strictly less than one we can have some $\lambda>1$ such that
\[\left|\lambda\check{Q}(x)\right|\leq 1\]
for $x\in[x_k]$. But then then $\lambda a_d$, the lead coefficient of $\lambda\check{Q}$ is greater than $a_d$. This contradicts the maximality of the lead coefficient $a_d$ of $Q(x)$.
\end{proof}

Finally we prove that there is a unique $L_{d,[x_k]}(x)$ for every $[x_k]$ and $d$ whenever $k>d\geq1$.

\begin{theorem}\label{Thm:Unique} If for $k>d\geq2$
\[L_{d,[x_k]}(x)=a_d\prod_{i\in [d]}(x-r_i)\]
is a degree $d$ polynomial with maximum lead coefficient $a_d$ such that
\[|L_{d,[x_k]}(x)|\leq 1\]
when
\[x\in[x_k]=\{x_1< x_2<\cdots< x_k\}\]
then $L_{d,[x_k]}(x)$ is the \emph{UNIQUE} degree $d$ polynomial with maximum lead coefficient $a_d$ such that for a given
\[[x_k]=\{x_1< x_2<\cdots< x_k\}\]
we have that for $x\in[x_k]$
\[|L_{d,[x_k]}|\leq 1.\]\end{theorem}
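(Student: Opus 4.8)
The plan is to exhibit, for $L=L_{d,[x_k]}$, an \emph{alternation set}: a list of $d+1$ points of $[x_k]$ at which $L$ takes the values $\pm1$ with alternating signs, and then to show this forces any competitor $P$ with the same lead coefficient to coincide with $L$. We may assume $a_d>0$, since replacing $Q$ by $-Q$ preserves the constraint $|Q|\le1$ on $[x_k]$ and the stretched Chebyshev polynomial of Section~\ref{Sec:ChebPolynomials} already has positive lead coefficient, so the maximal value of $a_d$ is positive.

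First I would assemble the lemmas of the previous two sections. By Lemmas~\ref{Thm:RealRoots} and~\ref{Thm:RootsInterval} and Theorem~\ref{Thm:DistinctRoots}, $L$ factors as $a_d\prod_{i=1}^d(x-r_i)$ with $x_1<r_1<r_2<\cdots<r_d<x_k$. Set $y_0=x_1$ and $y_d=x_k$, and for $1\le i\le d-1$ let $y_i\in[x_k]$ be a point with $r_i<y_i<r_{i+1}$ and $|L(y_i)|=1$, which exists by Lemma~\ref{Thm:TouchingBetweenRoots}. Then $y_0<y_1<\cdots<y_d$. Because $a_d>0$, the sign of $L$ on $(r_i,r_{i+1})$ (and on $(-\infty,r_1)$, resp.\ $(r_d,\infty)$) is $(-1)^{d-i}$; combining this with $L(x_1)=(-1)^d$ and $L(x_k)=1$ from Lemma~\ref{Thm:TerminalPoints} gives
\[ L(y_i)=(-1)^{d-i}\qquad(i=0,1,\dots,d), \]
so $L$ equioscillates at the $d+1$ ordered points $y_0,\dots,y_d$.

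Next, suppose $P$ is any degree-$d$ polynomial with lead coefficient $a_d$ and $|P(x)|\le1$ on $[x_k]$, and put $D=L-P$. Since the $x^d$-coefficients cancel, $\deg D\le d-1$. At each $y_i$ we have $|P(y_i)|\le1=|L(y_i)|$ with $L(y_i)=(-1)^{d-i}$, hence $(-1)^{d-i}P(y_i)\le(-1)^{d-i}L(y_i)$ and therefore
\[ (-1)^{d-i}D(y_i)\ge0\qquad(i=0,1,\dots,d). \]
The decisive step is now to deduce $D\equiv0$ from this weak sign-alternation at $d+1$ points: a nonzero polynomial obeying it must have at least $d$ zeros in $[y_0,y_d]$ counted with multiplicity, contradicting $\deg D\le d-1$. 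When none of the $D(y_i)$ vanishes this is the familiar count, since consecutive values have strictly opposite signs (as $(-1)^{d-(i-1)}=-(-1)^{d-i}$), producing $d$ distinct interior zeros. The only real work is the degenerate case in which $D$ vanishes at one or more of the $y_i$; there one groups the $y_i$ into runs separated by the points with $D(y_i)\ne0$ and checks that a block of $t$ consecutive vanishing $y_i$ always contributes $t+1$ zeros with multiplicity — either because an interior zero of $D$ is forced between the flanking nonzero-valued points, or because a zero at some $y_i$ that does not change the sign pattern demanded by its neighbours must have even order — and summing over the blocks again yields at least $d$ zeros. Hence $D\equiv0$ and $P=L_{d,[x_k]}$. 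For $d=1$ the statement is immediate, since maximality of $a_1$ forces $L(x_1)=-1$ and $L(x_k)=1$, which already determine the line. I expect this final bookkeeping — in effect the classical Chebyshev equioscillation uniqueness argument transplanted to a finite node set — to be the only point requiring care; everything preceding it is a matter of invoking the lemmas already established.
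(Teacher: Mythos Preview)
Your argument is correct and is the classical Chebyshev--de~la~Vall\'ee~Poussin equioscillation uniqueness proof: build the alternation set $y_0<\cdots<y_d$ from the structural lemmas, subtract a competitor $P$ with the same lead coefficient, and force $D=L-P$ to have at least $d$ zeros (with multiplicity) from the weak sign alternation $(-1)^{d-i}D(y_i)\ge0$, contradicting $\deg D\le d-1$. The paper reaches the same alternation set but then finishes differently and more cheaply: given two extremal polynomials $\grave Q,\acute Q$, it forms the average $\bar Q=\tfrac12(\grave Q+\acute Q)$, observes that $\bar Q$ is again extremal, applies the lemmas to $\bar Q$ to get $\bar Q(b_i)=(-1)^{(d+1)-i}$, and then notes that an average of two numbers in $[-1,1]$ equals $\pm1$ only if both summands equal that value, so $\grave Q$ and $\acute Q$ agree at $d+1$ points and hence everywhere. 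The averaging trick sidesteps entirely the degenerate zero-counting case analysis you sketch (your ``$t+1$ zeros per block'' claim is slightly off at boundary blocks, though the total still comes out $\ge d$ once you add the two end blocks' honest contribution of $t$ each and the interior blocks' $t+1$); the paper's route is shorter and uses only convexity, while yours is the textbook approximation-theory argument and would generalize immediately to, say, best approximation from any Haar system.
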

\begin{proof} Assume that $\grave{Q}(x)$ and $\acute{Q}(x)$ are two degree $d$ polynomials with the same lead coefficient $a_d>0$ and satisfying our condition that $a_d$ is the maximum lead coefficient such that both
\[|\grave{Q}(x)|\leq 1\]
and
\[|\acute{Q}(x)|\leq 1\]
when $x\in[x_k]$.
Now form the average  
\[\bar{Q}(x)=\frac{\grave{Q}(x)+\acute{Q}(x)}{2}.\]
This average also clearly has lead coefficient $a_d$ and satisfies our condition since
\[|\bar{Q}(x)|=\left|\frac{\grave{Q}(x)+\acute{Q}(x)}{2}\right|\leq \frac{|\grave{Q}(x)|+|\acute{Q}(x)|}{2}\leq 1\]
when $x\in[x_k]$.
By Theorem \ref{Thm:DistinctRoots}, Lemma \ref{Thm:TerminalPoints} and Lemma \ref{Thm:TouchingBetweenRoots}, for some 
\[a_1<a_2<\cdots<a_{d-1}\] 
such that 
\[\{a_1,a_2,\cdots,a_{d-1}\}\subset\{x_2,x_3,\cdots,x_{k-1}\}\] 
and with
\[\{b_1,b_2,\cdots,b_{d+1}\}=\{x_1,a_1,a_2,\cdots a_{d-1},x_k\}\] 
we have that the polynomial $\bar{Q}(x)$ passes through the $d+1$ points
\[(b_i,(-1)^{(d+1)-i})\text{ for }i\in[d+1]\]
or, stated differently, that
\[\bar{Q}(b_i)=(-1)^{(d+1)-i}\]
for $i\in[d+1]$.
But then because of our conditions on $\grave{Q}(x)$ and $\acute{Q}(x)$ bounding them between $-1$ and $1$ for $x\in[x_k]$ and since $\bar{Q}(x)$ is an average of these two polynomials, we must also have that
\[\grave{Q}(b_i)=\acute{Q}(b_i)=(-1)^{(d+1)-i}\]
for $i\in[d+1]$.  But if the two degree $d$ polynomials $\grave{Q}(x)$ and $\acute{Q}(x)$ are equal at $d+1$ points then they are equal everywhere.  Thus 
\[L_{d,[x_k]}(x)=\grave{Q}(x)=\acute{Q}(x)=\bar{Q}(x).\]
\end{proof}
\end{section}

\begin{section}{Calculating the Lead Coefficients of $L_{d,[k]}$ for $d\leq 4$ and an Algorithm for Generating All $L_{d,[x_k]}$}\label{Sec:LeadCoefficients}

This section begins with the algebraic computation of the lead coefficients for some specific examples of $L_{d,[x_k]}$.  Namely, when $[x_k]$ is the arithmetic progression $[k]=\{1,\cdots,k\}$.  Recall from Definition \ref{Def:LPolynomials} that $L_{d,[x_k]}$ is the unique degree $d$ polynomial with maximum lead coefficient when bounded between $-1$ and $1$ on the set $[k]$.  
This section ends with the description of an algorithm for generating all $L_{d,[x_k]}$ (and thus their lead coefficients).

\begin{subsection}{Lead coefficients of $L_{1,[k]}$}
This case is simple.  Lemma \ref{Thm:TerminalPoints} forces $L_{1,[k]}(1)=-1$ and $L_{1,[k]}(k)=1$.  Since $L_{1,[k]}(x)$ is a line and we have two points lying on it we find
\[L_{1,[k]}(x)=a_1x+a_0=\frac{2}{k-1}x-\frac{k+1}{k-1}.\] 
Thus the lead coefficients of $L_{1,[k]}(x)$
\[a_1=\frac{2}{k-1}.\]
\end{subsection}

\begin{subsection}{Lead coefficients of $L_{2,[k]}$}
Recall that $L_{2,[k]}(x)$ is the unique (by Theorem \ref{Thm:Unique}) degree $2$ polynomial bounded between $-1$ and $1$ for $x\in[k]=\{1,\cdots,k\}$ with maximum lead coefficient.  We shift $L_{2,[k]}(x)$ so that the $k$ consecutive $x$ values it is bounded on are centered at zero (that is, instead of being bounded for $x\in\{1,\cdots,k\}=[k]$ it is bounded for $x\in\{-\frac{k-1}{2},\cdots,\frac{k-1}{2}\}$).  This alters neither the shape of the polynomial in general nor its lead coefficient.  We set 
\[k^\prime=\left\{-\frac{k-1}{2},\cdots,\frac{k-1}{2}\right\}\]
and thus $L_{2,k^\prime}(x)$ refers to this shifted polynomial which, we will see, minimizes the number of unknown coefficients. Observe that since $\deg(L_{2,k^\prime}(x))=2$ is even, the difference 
\[\frac{L_{2,k^\prime}(x)+L_{2,k^\prime}(-x)}{2}=a_2 x^2+a_0\] 
describes an even-function that has the same lead coefficient as $L_{2,k^\prime}(x)$.
Since $|L_{2,k^\prime}(x)|\leq 1$ for $x\in\{-\frac{k-1}{2},\cdots,\frac{k-1}{2}\}$ then also $\left|\frac{L_{2,[k]}(x)+L_{2,k^\prime}(-x)}{2}\right|\leq 1$  for $x\in\{-\frac{k-1}{2},\cdots,\frac{k-1}{2}\}$.  But by the uniqueness of $L_{2,k^\prime}(x)$ (from Theorem \ref{Thm:Unique}) we must have
\[L_{2,k^\prime}(x)=\frac{L_{2,k^\prime}(x)+L_{2,k^\prime}(-x)}{2}\]
and thus
\[L_{2,k^\prime}(x)=a_2 x^2+a_0.\]
Two unknown coefficients are better than three.  
From Lemma \ref{Thm:TerminalPoints}, $L_{2,k^\prime}$ passes through $(\frac{k-1}{2},1)$.  Thus
\[L_{2,k^\prime}\left(\frac{k-1}{2}\right)=a_2\left(\frac{k-1}{2}\right)^2+a_0=1.\]  
Solving for $a_0$ gives
\[a_0=1-a_2\left(\frac{k-1}{2}\right)^2\]
Plugging this into 
\begin{align*}
a_2x^2+a_0 &\leq 1\\
a_2x^2+a_0 &\geq -1,
\end{align*}
the constraining inequalities on $L_{2,k^\prime}(x)$ for $x\in\{-(\frac{k-1}{2}-1),\cdots,\frac{k-1}{2}-1\}$,
we get
\begin{eqnarray}
a_2\left(x^2-\left(\frac{k-1}{2}\right)^2\right)&\leq&0\label{Ineq:VacuousBound1}\\
a_2&\leq&\frac{8}{(k-1)^2-4x^2}\label{Ineq:UpperBound1}
\end{eqnarray}
for $x\in\{-(\frac{k-1}{2}-1),\cdots,\frac{k-1}{2}-1\}$.

Inequality (\ref{Ineq:VacuousBound1}) is not useful because it is always true, since $a_2>0$ and $x^2<\left(\frac{k-1}{2}\right)^2$.  Inequality (\ref{Ineq:UpperBound1}) however gives an upper bound on $a_2$.  The right-side of inequality (\ref{Ineq:UpperBound1}) is minimum for $x\in\{-(\frac{k-1}{2}-1),\cdots,\frac{k-1}{2}-1\}$ when $x$ is closest or equal to zero.  For odd $k$ it is minimum when $x=0$.  For even $k$ it is minimum when $x=\pm\frac{1}{2}$.
Plugging the minimizing values of $x$ into inequality (\ref{Ineq:UpperBound1}) yields
\[
a_2=
\begin{cases}
\dfrac{8}{(k-1)^2}&\text{for }k\equiv 1\bmod 2\\
\dfrac{8}{k(k-2)}&\text{for }k\equiv 0\bmod 2
\end{cases}
\]
as the lead coefficient for $L_{2,k^{\prime}}(x)$ and thus also for $L_{2,[k]}(x)$.  
\end{subsection}

\begin{subsection}{Lead coefficients of $L_{3,[k]}$}
Recall that $L_{3,[k]}(x)$ is the unique (by Theorem \ref{Thm:Unique}) degree $3$ polynomial that while bounded between $-1$ and $1$ for $x\in[k]=\{1,\cdots,k\}$ has maximum possible lead coefficient.  We shift $L_{3,[k]}(x)$ so that, as with $L_{2,[k]}(x)$ above, the $k$ consecutive $x$ values it is bounded on are centered at zero (that is, instead of being bounded for $x\in\{1,\cdots,k\}$ it is bounded for $x\in\{-\frac{k-1}{2},\cdots,\frac{k-1}{2}\}$). This alters neither the shape of the polynomial in general nor its lead coefficient.  Again
\[k^\prime=\left\{-\frac{k-1}{2},\cdots,\frac{k-1}{2}\right\}.\]
So $L_{3,k^\prime}(x)$ refers specifically to this shifted polynomial which will minimize the number of unknown coefficients.  Observe that since $\deg(L_{3,k^\prime}(x))=3$ is odd, the difference
\[\frac{L_{3,k^\prime}(x)-L_{3,k^\prime}(-x)}{2}=a_3 x^3+a_1 x\] 
describes an odd-function that has the same lead coefficient as $L_{3,k^\prime}(x)$.
Since $|L_{3,k^\prime}(x)|\leq 1$ for $x\in\{-\frac{k-1}{2},\cdots,\frac{k-1}{2}\}$ then also $\left|\frac{L_{3,k^\prime}(x)-L_{3,k^\prime}(-x)}{2}\right|\leq 1$  for $x\in\{-\frac{k-1}{2},\cdots,\frac{k-1}{2}\}$.  But then by uniqueness (from Theorem \ref{Thm:Unique}) we must have
\[L_{3,k^\prime}(x)=\frac{L_{3,k^\prime}(x)-L_{3,k^\prime}(-x)}{2}\]
and thus
\[L_{3,k^\prime}(x)=a_3 x^3+a_1 x.\]
Two unknown coefficients are better than four.  

From Lemma \ref{Thm:TerminalPoints}, $L_{3,k^\prime}$ passes through $(\frac{k-1}{2},1)$.  Thus
\[L_{3,k^\prime}\left(\frac{k-1}{2}\right)=a_3\left(\frac{k-1}{2}\right)^3+a_1\left(\frac{k-1}{2}\right)=1.\]  
Solving for $a_1$ gives
\[a_1=\frac{8-a_3 (k-1)^3}{4(k-1)}\]
Plugging this into 
\begin{align*}
a_3x^3+a_1x^1&\leq 1\\
a_3x^3+a_1x^1&\geq -1,
\end{align*}
the constraining inequalities on $L_{3,k^\prime}(x)$ for $x\in\{-(\frac{k-1}{2}-1),\cdots,\frac{k-1}{2}-1\}$, we get
\begin{align}
a_3\geq\frac{-4}{2(k-1)x^2+(k-1)^2x}\label{Ineq:LowerBound}\\
a_3\leq\frac{-4}{2(k-1)x^2-(k-1)^2x}\label{Ineq:UpperBound}
\end{align}
for $x\in\{-(\frac{k-1}{2}-1),\cdots,\frac{k-1}{2}-1\}$.

Inequality (\ref{Ineq:LowerBound}) is not useful.  It gives lower bounds for our positive $a_3$ but we are looking for the maximum possible value of $a_3$.  Inequality (\ref{Ineq:UpperBound}) however gives us upper bounds for $a_3$.  The right-side of inequality (\ref{Ineq:UpperBound}) is minimum on $x\in\{-(\frac{k-1}{2}-1),\cdots,\frac{k-1}{2}-1\}$ for $x$ closest or equal to $\frac{k-1}{4}$.
Plugging the minimizing values of $x$ into the inequality (\ref{Ineq:UpperBound}) yields
\[
a_3=
\begin{cases}
\dfrac{32}{(k-1)^3}&\text{when } k\equiv 1\bmod 4,\text{ set } x=\frac{k-1}{4}\\
\dfrac{32}{k(k-1)(k-2)}&\text{when } k\equiv 2\bmod 4,\text{ set } x=\frac{k}{4}\\
\dfrac{32}{(k+1)(k-1)(k-3)}&\text{when } k\equiv 3\bmod 4,\text{ set } x=\frac{k-3}{4}\text{ or } x=\frac{k+1}{4}\\
\dfrac{32}{k(k-1)(k-2)}&\text{when } k\equiv 0\bmod 4,\text{ set } x=\frac{k-2}{4}
\end{cases}
\]
for the lead coefficient of $L_{3,k^{\prime}}(x)$ and thus also for $L_{3,[k]}(x)$.  
\end{subsection}

\begin{subsection}{Lead coefficients of $L_{4,[k]}$}
Using the same arguments as from $L_{2,[k]}$ above we know that the maximum possible lead coefficient $a_4 > 0$ of the polynomial and even-function
	\[L_{4,k^\prime}(x)=a_4 x^4+a_2 x^2 + a_0,\]
where $k^\prime= \{-\frac{k-1}{2},\cdots,\frac{k-1}{2}\}$ and $k>4$, is the same as the maximum possible lead coefficient of $L_{4,[k]}$.

This $d=4$ case is itself split into two cases, one for odd $k$ and one for even $k$:

\begin{description}
\item[Case I] For odd $k$. By Theorem \ref{Thm:DistinctRoots} and \ref{Thm:TouchingBetweenRoots} we have the equation
\[L_{4,k^\prime}(0)=1\]
and so
\[a_0=1.\]

By Lemma \ref{Thm:TerminalPoints} we have the equation
\[L_{4,k^\prime}\left(\frac{k-1}{2}\right)=1\]
and thus
\[a_4\left(\frac{k-1}{2}\right)^4+a_2 \left(\frac{k-1}{2}\right)^2 + a_0=1\]
Rewriting this in terms of $a_2$ after substituting $1$ for $a_0$ gives us
\[a_2=-a_4\left(\frac{k-1}{2}\right)^2\]
Recall that the lower bound is
\[L_{4,k^\prime}(x)\geq -1\]
for $x\in \{-(\frac{k-1}{2}),\cdots,\frac{k-1}{2}\}$.  Plugging the results from above into this inequality gives
\[a_4 x^4 - a_4\left(\frac{k-1}{2}\right)^2x^2 +1 \geq -1\]
and thus
\[a_4\leq \frac{8}{x^2(k-1)^2-4x^4}\]
for $x\in \{-(\frac{k-1}{2}),\cdots,\frac{k-1}{2}\}$.

Optimization of the right-side of this last inequality on the interval $(0,\frac{k-1}{2})$ gives a minimum at $x=\frac{k-1}{2\sqrt{2}}$. We pick the neighboring integer point that gives the smallest upper bound: so for odd $k$ the maximum lead coefficient of $L_{4,k^{\prime}}(x)$, and thus also of $L_{4,[k]}(x)$, is  
\[a_4=\min\limits_{ x\in I}\left\{\frac{8}{x^2(k-1)^2-4x^4}\right\}\]
where $I=\{\lceil {\frac{k-1}{2\sqrt{2}}}\rceil,\lfloor {\frac{k-1}{2\sqrt{2}}}\rfloor\}$.\\

\item[Case II]
For even $k$. Again by Theorem \ref{Thm:DistinctRoots} and Lemma \ref{Thm:TouchingBetweenRoots} we have
\[L_{4,k^\prime}\left(\frac{1}{2}\right)=1\]
which yields
\[L_{4,k^\prime}\left(\frac{1}{2}\right)=a_4 \left(\frac{1}{2}\right)^4+a_2 \left(\frac{1}{2}\right)^2 + a_0=1\]
and thus 
\begin{equation}\label{d4keven1}
a_0=1-\frac{a_2}{4}-\frac{a_4}{16}
\end{equation}
And again by Lemma \ref{Thm:TerminalPoints} we have the second equation
\[L_{4,k^\prime}\left(\frac{k-1}{2}\right)=1\]
and thus
\begin{equation}\label{d4keven2}
a_4\left(\frac{k-1}{2}\right)^4+a_2\left(\frac{k-1}{2}\right)^2 + a_0=1
\end{equation}
and so by combining the two equations (\ref{d4keven1}) and (\ref{d4keven2}) we have
\[a_0=a_4\frac{(k-1)^2}{16}+1\]
\[a_2=-a_4\frac{(k-1)^2+1}{4}.\]
Recall that the lower bound on our polynomial is
	\[L_{4,[k]}(x)\geq -1\]
for $x\in \{-(\frac{k-1}{2}),\cdots,\frac{k-1}{2}\}$.  Plugging the results from above section into this inequality yields
\[a_4\leq \frac{-32}{16x^4-4((k-1)^2+1)x^2+(k-1)^2}\]
for $x\in \{-(\frac{k-1}{2}),\cdots,\frac{k-1}{2}\}$

Optimization of the right-side of this last inequality on $(\frac{1}{2},\frac{k-1}{2})$ gives a minimum at $x=\frac{k-1}{2\sqrt{2}}$.  We pick the neighboring HALF-integer point that gives the smallest upper bound.  So for even $k$ the lead coefficients of $L_{4,k^{\prime}}(x)$, and thus also of $L_{4,[k]}(x)$, is
\[a_4=\min\limits_{ x\in H}\left\{\frac{-32}{16x^4-4((k-1)^2+1)x^2+(k-1)^2}\right\}\]
where
$H=\{\frac{1}{2}\lceil {\frac{k-1}{\sqrt{2}}}\rceil,\frac{1}{2}\lfloor {\frac{k-1}{\sqrt{2}}}\rfloor\}$.
\end{description}

\end{subsection} 

\begin{subsection}{An algorithm generating all $L_{d,[x_k]}$}\label{algorithmSection}
For $d=5$ only one of the three coefficients, $\{a_5,a_3,a_1\}$, can be eliminated when using the algebraic techniques from the sections on $d\leq 4$.  For $d=6$ only two of the four coefficients, $\{a_6,a_4,a_2,a_0\}$, can be eliminated. Fortunately, putting together some of our theorems and lemmas yields an algorithm that generates $L_{d,[x_k]}(x)$ for all $d\geq1$ and $k>d$. 

We begin similarly to Theorem \ref{Thm:Unique}.  For a given
\[[x_k]=\{x_1< x_2<\cdots< x_k\}\]
if $L_{d,[x_k]}(x)$ is the degree $d$ polynomial with maximum lead coefficient $a_d$ such that when $x\in[x_k]$
\[|L_{d,[x_k]}(x)|\leq 1\] 
then by Theorem \ref{Thm:DistinctRoots}, Lemma \ref{Thm:TerminalPoints} and Lemma \ref{Thm:TouchingBetweenRoots} there is some 
\[A=\{a_1<a_2<\cdots<a_{d-1}\}\subset\{x_2,x_3,\cdots,x_{k-1}\},\]  
and thus
\[\{b_1,b_2,\cdots,b_{d+1}\}=\{x_1,a_1,a_2,\cdots,a_{d-1},x_k\},\] 
such that the polynomial $L_{d,[x_k]}(x)$ passes through the $d+1$ points
\[(b_i,(-1)^{(d+1)-i})\text{ for }i\in[d+1].\]  
But now since $L_{d,[x_k]}(x)$ is a degree $d$ polynomial we could solve for it explicitly if we knew the set $A$.  With a computer this is easily done. For each of the $\binom{k-2}{d-1}$ possible $d-1$ element sets $A\subset\{x_2,x_3,\cdots,x_{k-1}\}$, solve for the polynomial passing through the corresponding points
\[(b_i,(-1)^{(d+1)-i})\text{ for }i\in[d+1].\]  
From the multiset of all $\binom{k-2}{d-1}$ such polynomials choose the subset of polynomials bounded between $-1$ and $1$ for $x\in[x_k]$.  From this subset the polynomial with maximum lead coefficient is our $L_{d,[x_k]}(x)$ and its maximum lead coefficient is our $a_d$.  In particular, setting 
\[\{x_1,x_2,\cdots,x_{k}\} = \{1,2,\cdots,k\}\] 
yields $L_{d,[k]}(x)$.
\end{subsection}
\end{section}

\begin{section}{Some $L_{d,[k]}$ in Terms of Chebyshev Polynomials}\label{Sec:TermsCheb}
Now we write some of the $L_{d,[k]}(x)$ in terms of corresponding Chebyshev $T_d(x)$ polynomials.  We compose our $L_{d,[k]}(x)$ with 
\[t(x)=\frac{k-1}{2}x+\frac{k+1}{2}\] 
in order to fit the points $[k]=\{1,...,k\}$ (on which our $L_{d,[k]}(x)$ are bounded between $-1$ and $1$) into the interval $(-1,1)$ (on which Chebyshev's $T_d(x)$ are bounded between $-1$ and $1$).
\subsection{$L_{1,[k]}$ in terms of $T_1$}
\[L_{1,[k]}\left({ \frac{k-1}{2}x+\frac{k+1}{2}}\right)=x=T_1(x)\]

\subsection{$L_{2,[k]}$ in terms of $T_2$}
\begin{align*}
L_{2,[k]}\left({ \frac{k-1}{2}x+\frac{k+1}{2}}\right)&= T_2(x) &\scriptstyle\text{for k }\equiv 1\bmod 2\\
L_{2,[k]}\left({ \frac{k-1}{2}x+\frac{k+1}{2}}\right)&= T_2(x)+\frac{2}{k(k-2)}(x^2-1) &\scriptstyle\text{for k }\equiv 0\bmod 2
\end{align*}

\subsection{$L_{3,[k]}$ in terms of $T_3$}
\begin{align*}
L_{3,[k]}\left({\frac{k-1}{2}x+\frac{k+1}{2}}\right)&= T_3(x) &\scriptstyle\text{for k }\equiv 1\bmod 4\\
L_{3,[k]}\left({ \frac{k-1}{2}x+\frac{k+1}{2}}\right)&= T_3(x) + \frac{4}{k(k-2)}(x^3-x) &\scriptstyle\text{for k }\equiv 2\bmod 4\\
L_{3,[k]}\left({ \frac{k-1}{2}x+\frac{k+1}{2}}\right)&= T_3(x) + \frac{16}{(k+1)(k-3)}(x^3-x) &\scriptstyle\text{for k }\equiv 3\bmod 4\\
L_{3,[k]}\left({ \frac{k-1}{2}x+\frac{k+1}{2}}\right)&= T_3(x) + \frac{4}{k(k-2)}(x^3-x) &\scriptstyle\text{for k }\equiv 0\bmod 4
\end{align*}

\subsection{$L_{4,[k]}$ in terms of $T_4$}
For the $d=4$ case the method of solving for the lead coefficient $a_4$ does not directly yield a modular pattern like it did for the $d\in\{1,2,3\}$ cases.  For $21\geq k\geq 5$ we have 
\begin{align*}
L_{4,[5]}\left({ \frac{4}{2}x+\frac{6}{2}}\right)&= T_4(x) + \frac{8}{3}(x^4-x^2) \\
L_{4,[6]}\left({ \frac{5}{2}x+\frac{7}{2}}\right)&= T_4(x) + \frac{1}{64}(x^2-1)(113x^2-25)\\
L_{4,[7]}\left({ \frac{6}{2}x+\frac{8}{2}}\right)&= T_4(x) + \frac{1}{10}(x^4-x^2)\\
L_{4,[8]}\left({ \frac{7}{2}x+\frac{9}{2}}\right)&= T_4(x) + \frac{1}{288}(x^2-1)(97x^2-49)\\
L_{4,[9]}\left({ \frac{8}{2}x+\frac{10}{2}}\right)&= T_4(x) + \frac{8}{63}(x^4-x^2)\\
L_{4,[10]}\left({ \frac{9}{2}x+\frac{11}{2}}\right)&= T_4(x) + \frac{1}{256}(x^2-1)(139x^2-27)\\
L_{4,[11]}\left({ \frac{10}{2}x+\frac{12}{2}}\right)&= T_4(x) + \frac{49}{72}(x^4-x^2)\\
L_{4,[12]}\left({ \frac{11}{2}x+\frac{13}{2}}\right)&= T_4(x) + \frac{1}{1728}(x^2-1)(817x^2-121)\\
L_{4,[13]}\left({ \frac{12}{2}x+\frac{14}{2}}\right)&= T_4(x) + \frac{1}{10}(x^4-x^2)\\
L_{4,[14]}\left({ \frac{13}{2}x+\frac{15}{2}}\right)&= T_4(x) + \frac{1}{3520}(x^2-1)(401x^2-169)\\
L_{4,[15]}\left({ \frac{14}{2}x+\frac{16}{2}}\right)&= T_4(x) + \frac{1}{300}(x^4-x^2)\\
\end{align*}
\begin{align*}
L_{4,[16]}\left({ \frac{15}{2}x+\frac{17}{2}}\right)&= T_4(x) + \frac{1}{416}(x^2-1)(47x^2-15)\\
L_{4,[17]}\left({ \frac{16}{2}x+\frac{18}{2}}\right)&= T_4(x) + \frac{8}{63}(x^4-x^2)\\
L_{4,[18]}\left({ \frac{17}{2}x+\frac{19}{2}}\right)&= T_4(x) + \frac{1}{10080}(x^2-1)(2881x^2-289)\\
L_{4,[19]}\left({ \frac{18}{2}x+\frac{20}{2}}\right)&= T_4(x) + \frac{1}{10}(x^4-x^2)\\
L_{4,[20]}\left({ \frac{19}{2}x+\frac{21}{2}}\right)&= T_4(x) + \frac{1}{16128}(x^2-1)(1297x^2-361)\\
L_{4,[21]}\left({ \frac{20}{2}x+\frac{22}{2}}\right)&= T_4(x) + \frac{8}{2499}(x^4-x^2).\\
\end{align*}

\end{section}

\end{document}